\setlist{leftmargin=5mm}
\newtheorem{example}{Example}
\newtheorem{theorem}{Theorem}
\newtheorem{lemma}{Lemma}
\newtheorem{observation}{Observation}
\newtheorem*{definition}{Definition}
\def\L{{\mathcal L}}
\def\M{{\mathcal M}}
\def\A{{\EuScript A}}
\def\B{{\EuScript B}}
\def\C{{\EuScript C}}
\def\D{{\EuScript D}}
\def\P{{\EuScript P}}
\def\Q{{\EuScript Q}}
\def\R{{\EuScript R}}
\def\S{{\EuScript S}}
\def\Z{\mathbb{Z}}
\newcommand\wordlength[1]{|#1|}
\newcommand\word[3]{{#1}_{#2}\cdots{#1}_{#3}}
\newcommand\pair[2]{\langle#1, #2\rangle}
\newcommand\wordconcat[2]{#1\,#2}
\newcommand\preduction[1]{\overline{#1}}
\newcommand\pexpansion[1]{\widetilde{#1}}
\newcommand\prefix[2]{#1_{#2}}
\newcommand\suboperator[1]{\theta #1}
\newcommand\suboperatornext{\pair{[\prefix{A}{i-1} (a_i - 1) (s - 1)^{j - i}]^{q} \prefix{A}{n-qj}}{0}}
\newcommand\suboperatorletternext[1]{\pair{[\prefix{#1}{i-1} ({\lowercase{#1}}_i - 1) (s - 1)^{j - i}]^{q} \prefix{#1}{n-qj}}{0}}
\newcommand{\maximal}{maximal }
\def\@settitle{\begin{center}%
  \baselineskip14\p@\relax
    \normalfont\LARGE
  \@title
  \end{center}%
}
\begin{document}
\let\MakeUppercase\relax 

\title[Lyndon pairs and 
the  lexicographically greatest perfect necklace]{\Large Lyndon pairs and 
the  lexicographically greatest perfect necklace}

\author[Verónica Becher and Tomás Tropea]{
\begin{tabular}{cc}
Ver\'onica Becher \hspace{2cm}& \hspace{2cm} Tomás Tropea\\
{\tt vbecher@dc.uba.ar} & \hspace{2cm}{\tt tomastropeaa@gmail.com}\\
\end{tabular}\\\\
{\small  Departamento  Computaci\'on, Facultad de Ciencias Exactas y Naturales, Universidad de Buenos Aires\\and ICC CONICET.
 Pabell\'on 0, Ciudad Universitaria, C1428EGA Buenos Aires, Argentina}
}

\date{\today}

\subjclass[2020]
{Primary 68R15,\ 05A05; Secondary 11K16}

\keywords{de Bruijn sequences, Lyndon words}

\maketitle

\begin{abstract}
Fix a finite alphabet. A necklace is a circular word. For  positive integers $n$ and~$k$, a necklace is $(n,k)$-perfect if  all words of length $n$  occur $k$ times but at positions with  different congruence modulo $k$, for any convention of the starting position. We define the notion of a Lyndon pair and we use it  to construct the lexicographically greatest $(n,k)$-perfect necklace, for any $n$ and $k$ such that~$n$ divides~$k$ or $k$ divides~$n$.
Our construction generalizes  Fredricksen and Maiorana's construction of the lexicographically greatest de Bruijn sequence of order $n$, based on the concatenation of the Lyndon words whose length divide $n$. 
\end{abstract}


\section{Introduction}

Let $\Sigma$ be a finite alphabet with at least two symbols.
A word on $\Sigma $ is a finite sequence of symbols, and a necklace is the equivalence class of a word under rotations.
Given two positive integers, $n$ and $k$, a necklace is $(n,k)$-perfect if  all words of length $n$ 
occur $k$ times but at positions with 
different congruence modulo $k$, for any convention of the starting position.
The well known circular de Bruijn  sequences  of order $n$, see~\cite{deBruijn1946,Kor51,Kor52}, 
are exactly the $(n,k)$-perfect necklaces for $k=1$.
For example, $11100100$ is a $(2,2)$-perfect for $\Sigma=\{0,1\}$.
The $(n,k)$-perfect necklaces correspond to Hamiltonian cycles in the tensor product of the de Bruijn graph with a simple cycle of length $k$.

A thorough presentation of  perfect necklaces appears in~\cite{ABFY}. With the purpose of constructing normal numbers with very fast convergence to normality M. Levin in  \cite{Levin1999} gives two  constructions of perfect necklaces. 
One  based on arithmetic progressions with difference coprime with the alphabet size which  yields $(n,n)$-perfect necklaces. The other based on Pascal triangle  matrix which yields nested $(n,n)$-perfect necklaces when $n$ is a power of $2$.
In~\cite{BecherCarton2019} there is a method of constructing all nested $(n,n)$-perfect necklaces for the alphabet~$\{0,1\}$.

Assume the lexicographic  order on words.
A Lyndon word is a nonempty aperiodic word that is lexicographically greater than  all of its rotations.
For example, the Lyndon words over  alphabet $\{0,1\}$
sorted by length and then in decreasing lexicographical order within each length  are
\[
1,0, 10,  110, 100, 
1110, 1100, 1000, 
11110,11100,11010,11000,
  10100,    10000, \ldots 
\]

Lyndon words were introduced by Lyndon in the  1950s \cite{Lyndon,LyndonII}.
They  provide a nice factorization of the free monoid $\Sigma^*$:
 each word $w$ in $\Sigma^*$  
has a unique decomposition as a product 
$w = u_1 \ldots u_n$ of 
a non-increasing sequence of Lyndon words $u_1, \ldots u_n$
in  the lexicographic order. 
The problem to compute the  prime factorization of a given word 
has  a
solution in time linear to the length of the given word  \cite{Duval}, see also \cite{Knuth3}.

Fredricksen and Maiorana \cite{FM78} construct a de Bruijn sequence of order $n$ by concatenating all the Lyndon words whose length divides~$n$.
They first identify each necklace with the word that represents the  lexicographically maximal rotation. Order the necklaces according to lexicographical order of these words.
Fredericksen and Maiorana de Bruijn sequence of order $n$ is the concatenation, according to this order of the necklaces,  of the respective   Lyndon words having  length divisible by  $n$.
For example,   the binary words of length $4$  yield $6$ necklaces, and the words representing the  lexicographically maximal rotations,  in decreasing lexicographical order are:
\[
1111, \ \ 
1110, \ \ 
1100, \ \ 
1010, \ \ 
1000, \ \ 
0000.  
\]
The corresponding Lyndon words,  in the above order are:
\[
1, \ \ 1110, 1100,\ \ 10, \ \ 1000,\ \ 0.
\]
The length of each of them divides $n=4$, because it is $1$ or $2$ or $4$. Then,  none of  them is discarded, hence,   
Fredericksen and Maiorana's de Bruijn sequence of order $n$ sequence is: 
\[
1\ 1110\  1100\  10\  1000\ 0.
\]
This construction, together with the efficient generation of Lyndon words,  provides a method for constructing  the lexicographically greatest  de Bruijn necklace of each order $n$
 in linear time and logarithmic space.

In this note we present the notion of Lyndon pairs and we use it to  generalize Fredricksen and Maiorana's algorithm  to construct the lexicographically greatest $(n,k)$-perfect necklaces,
for any $n$ and $k$ such that~$n$ divides~$k$ or $k$ divides~$n$.
Our presentation also 
elaborates  the rather briefly presented ideas and proofs in \cite{FM78}.

\section{Lyndon pairs in lexicographical order}

\subsection{Lyndon pairs}

We assume a finite alphabet $\Sigma$ with cardinality $s$, with $s\geq 2$.
Without loss of generality we assume  $\Sigma=\{0, \ldots , s-1\}$.
We use lowercase letters $a, b, c$ possibly with subindices for alphabet symbols.
Words are finite sequences of symbols that we write  $a_1 a_2 \ldots a_n$ or with a capital letter  $A,B, C$.
We write $a^\ell$ to denote the word of length $\ell$ made just of~$a'$s.
and we write $A^\ell$ to denote the word made of $\ell$ copies of $A$.
The concatenation of two words $A$ and $B$ is written $AB$.
The length of a word $A$ is denoted with $|A|$.
The positions of a word $A$ are numbered  from $1$ to $|A|$.
We use $>$ to denote the decreasing lexicographic order on words and we write 
$A\geq B$  when $A > B$ or $A=B$.

We use lowercase letters $h,\ldots, z$ to denote non-negative integers.
We write $k|n$ to say that~$k$ divides $n$.
We write $\Z_k$ for the set $\Z/k\Z$ of residues modulo~$k$.
We also use $<$ and $>$ for the natural orders on $\Z_k$ and $\mathbb{N}$ and, as usual,   $u\leq v$  when $u< v$ or $u =v$; and $v\geq u$ when $v>u $ or $v=u$.
When $u<v$ we may write $v>u$.

We work with pairs in $\Sigma^n\times \Z_k$ 
when $k|n$  or $n|k$.
This condition on $n$ and $k$ is  assumed   all along the sequel.
We refer to pairs $\pair{A}{u}$ with calligraphic letter $\A,\B,\C, \ldots$.
If $\A=\pair{A}{u}$ in  $\Sigma^n\times \Z_k$  we write $\A^p$ to denote  $\pair{A^p}{u}$ in $\Sigma^{pn}\times \Z_k$ .
We consider the following order  $\succ$ over  $\Sigma^n\times\Z_k$.

\begin{definition}[order $\succ$ on $\Sigma^n\times\Z_k$]\em
\[
\pair{A}{u} \succ \pair{B}{v}  \text{  exactly when  either }
  (A> B  \text{ and } u=v ) \text{ or } (k-1-u)>(k-1-v). 
\]
\end{definition}
The smallest the second component in $\Z_k$, the $\succ$-greater the pair. Among pairs with the same second component in $\Z_k$, the order $\succ$ is defined with the decreasing lexicographic order on $\Sigma^n$.
Thus, $\pair{(s-1)^n}{0}$ is the $\succ$-greatest  
in  $\Sigma^n\times\Z_k$
 and $\pair{0^n}{k-1}$ is the $\succ$-least.
As usual, we write
$\pair{A}{u}\succeq \pair{B}{v} $ exactly when $ \pair{A}{u}\succ\pair{B}{v}$ or $\pair{A}{u}=\pair{B}{v}$.

\begin{definition}[rotation of a pair]\em
Given a pair $\pair{a_1\ldots a_n}{u}$ in $\Sigma^n\times\Z_k$
its right rotation is the pair 
$\pair{a_n a_1\ldots a_{n-1}}{u-1}$ and its left rotation is  the pair $\pair{a_2\ldots a_na_1}{ u+1}$.
\end{definition}

For $s = 3$, $n=k = 5$
and  $\pair{13212}{4}$, 
 its right rotation is $\pair{21321}{3}$, and 
its left rotation  
is $\pair{32121}{0}$.
The rotation function induces a relation between pairs:
two pairs are related if successive rotations initially applied to the first yield the second.
This relation is clearly reflexive and transitive.
For pairs in   $\Sigma^n\times \Z_k$, 
when  $k|n$  or $n|k$,
the rotation has an inverse, given by successive rotations.
So the relation is also symmetric, hence,  an equivalence relation.

\begin{definition}[\maximal pair]\em
A necklace in $\Sigma^n\times\Z_k$
 is a set of pairs that are equivalent under rotations.
 For  each $u=0, \ldots k-1$, the following set is a necklace
\[
 \{\pair{a_{i+1}\ldots a_na_1\ldots a_{i}}{u+i}:  0\leq i < \max(n,k) \}.
 \]
In each necklace we are interested in the pair that is maximal in the order $\succ$. 
If a pair $\A$ in $\Sigma^n\times\Z_k$ is $\succeq$-maximal among its rotations we call it 
maximal.
\end{definition}
For example, for $n=4$ and $k=2$ 
the pair $\pair{1110}{0}$ is \maximal because it is $\succ$-greater than  its rotations 
$\pair{1101}{1}$,
  $\pair{1011}{0}$ and
    $\pair{0111}{1}$. 
The pair  $\pair{1010}{0}$ is \maximal because it is $\succ$-greater than  its rotation
$\pair{0101}{1}$.

\begin{observation}\label{obs:todos-maximales}
When $n|k$  
the  \maximal pairs  in 
$\Sigma^n\times\Z_k$
are the pairs $\langle A,0\rangle$ for $A\in \Sigma^n$.
\end{observation}

We can concatenate pairs in 
$\Sigma^n\times \Z_k$ 
 having the same second component:
the concatenation of  $\pair{A}{u}$ and  $\pair{B}{u}$ is $\pair{AB}{u}$.
Given $\A=\pair{A}{u}$ we write $\A^p$ to denote the pair $\pair{A^p}{u}$.

\begin{observation}\label{obs:copias}
Assume  $k|n$ or $n|k$.
Then, $\A \in \Sigma^n\times\Z_k$ is  \maximal exactly when,
 for any $p\geq 1$,
 $\A^p$
 is \maximal in $\Sigma^{pn}\times\Z_k$.
\end{observation}

\begin{proof}
($\implies$).
If $n|k$ then all pairs in $\Sigma^n\times \Z_k$ with second component $0$  are \maximal.
We prove it for  $k|n$, $k<n$.
Assume  $\A=\pair{A}{u}$ such that $\A^p=\pair{A^p}{u}$ is \maximal, but $\A$ is not maximal. 
Then $\A$ has a rotation with $j\geq 1$ such that 
$
\R = \pair{(\word{a}{j+1}{n}\word{a}{1}{j})}{u+j}$
and $\R\succ\A$.
Then either,
\[
\word{a}{j+1}{n}\word{a}{1}{j}> \word{a}{1}{n}, \text{ or }
(k-1-(u+j))>(k-1-u).
\]
But this implies that either
\[
(\word{a}{j+1}{n}\word{a}{1}{j})^p
>
(\word{a}{1}{n})^p, \text{ or } (k-1-(u+j))>(k-1-u).
\]
This, in turn, implies  $\R^p\succ \A^p$, which  contradicts that  $\A^p$ is maximal.

$(\impliedby)$.
Assume  $\A$ is \maximal but $\A^p$ is not. 
Then, since $\A$ is maximal $\A=\pair{A}{0}$ and $\A^p=\pair{A^p}{0}$.
Since $\A^p$ is not maximal it has a rotation such that  $\R\succ \A^p$. 
Then, necessarily,
\[
\R=\pair{R}{0}=\pair{\word{a}{1+j}{n}\word{a}{1}{j}A^{p-1}}{0}.
\]
Given that the second component of both $\R$ and $\A^p$ is $0$,  necessarily $R>  A^p$. But this contradicts that $\A=\pair{A}{0}$ was a \maximal rotation.
\end{proof}

\begin{observation} \label{obs:reduccion-maximo} 
If $k|n$ and  $\A\in \Sigma^n\times \Z_k$ is a \maximal pair then none of its rotations are $\succ$-greater than $\A$, but there may be  a rotation that is equal to $\A$. 
\end{observation}

For a word $A=a_1\ldots a_n$ we write  $A_i $ to denote its prefix of length $i$, that is, $a_1\ldots a_i$.

\begin{lemma}\label{lemma:es-maximal}
Let $n$ and $k$ be positive integers such that  $k|n$ or $n|k$. 
Let  
$\A = \pair{A}{0}$
 in $\Sigma^n\times \Z_k$ be \maximal and
different from $\pair{0^n}{0}$.
Suppose  $A=a_1\ldots a_n$, let $i$ be  such that  $a_i>0 $ and let 
\[
\B = \pair{\prefix{A}{i-1} (a_i - 1) (s-1)^{j-i}}{0}, \]
where 
if $n|k$ then $j=n$;
and  if $k|n$ then $j$ is the smallest such that $k|j$ and $i\leq j< n$.
Then, $\B\in\Sigma^j\times \Z_k$ is maximal.
\end{lemma}
\begin{proof}
Let $\A$ be a \maximal pair .
By way of contradiction, assume 
$\B$=$\pair{\prefix{A}{i-1} (a_i - 1) (s-1)^{j-i}}{0}$ is not a \maximal pair.
Then there is some~$\ell$ multiple of $k$ such that :
    \begin{center}
    $\pair{\word{a}{\ell + 1}{i-1} (a_i - 1) (s-1)^{j-i} \prefix{A}{\ell}}{0 +\ell} \succ \pair{\prefix{A}{i-1} (a_i - 1) (s-1)^{j-i}}{0}$
    \end{center}
Necessarily
\[   \word{a}{\ell + 1}{i-1} \geq 
   \word{a}{1}{(i - 1) - (\ell + 1) + 1}.
\]
Since  $\A$ is a \maximal pair,
\[
 \word{a}{1}{(i - 1) - (\ell + 1) + 1} \geq 
 \word{a}{\ell + 1}{i-1}.
\]
Therefore, $a_{\ell+1} = a_1$, $a_{\ell+2} = a_2$, $\ldots $, $a_{i-1} = a_{(i - 1) - (\ell+1) + 1}$. This implies, $a_i - 1 \geq a_{(i - 1) - (\ell+1) + 2}$. 
Consequently, 
\[
   \word{a}{\ell + 1}{i-1} a_i > 
   \word{a}{1}{(i - 1) - (\ell + 1) + 2},
\]
 but this contradicts that $\A$ is a \maximal  pair.
\end{proof}

For example, let $s = 7$, $n = 6$, $k = 3$ and   $\A=\pair{456123}{0}$. Since $\A$ is maximal and  the symbol in position $i=3$ is  $6$, letting $j=3$ (a multiple of $k$ grater than or equal to $i$) we obtain 
in  $\Sigma^j\times \Z_k$ the maximal pair $\B=\pair{455}{0}$.
No filling with $s-1=6$ was required.
Since  the symbol  in position $i=1$ is $4$, letting $j=3$ we obtain the maximal pair $\B=\pair{366}{0}$ which is obtained by concatenating $s-1=6$ $(j-i)=2$ times.

\begin{definition}[reduction]
Let $n$ and $k$ be positive integers such that $k|n$.
The reduction of a word  $A= \word{a}{1}{n}$ is the word
 $\preduction{A} 
 =\word{a}{1}{p}$ where  
 $ (\word{a}{1}{p})^{n / p}=\word{a}{1}{n}$ and 
 $p$ is the smallest  such that 
 $k \mid p$, $p \mid n$.
 The reduction of a pair  $\A = \pair{A}{u}$ in $\Sigma^n\times\Z_k$, 
is  the  pair 
$\preduction{\A}=\pair{\preduction{A}}{u}$ in $\Sigma^p\times\Z_k$.    \end{definition}

When $k|n$, the reduction always exists because  one can take  $p=n$.   
Notice that all the rotations of a reduced pair are pairwise different.
For example, for  $s = 8$, $n = 8$ and  $k = 2$, 
 $\preduction{\pair{10101010}{0}}  =  \pair{10}{0}$;
 $\preduction{\pair{01230123}{0}}  =  \pair{0123}{0}$;
$\preduction{\pair{01234567}{0}}  =  \pair{01234567}{0}$.

\begin{definition}[expansion]
Let $n$ and $k$ be positive integers such that $n|k$.
The expansion of a word  $A= \word{a}{1}{n}$ is the word 
 $\pexpansion{A}=(\word{a}{1}{n})^{k/n}$.
The expansion of a pair 
$\A = \pair{A}{u}$   in $\Sigma^n\times \Z_k$ is the pair $\pexpansion{\A}=
\pair{\pexpansion{A}}{u}$
in 
$\Sigma^k\times \Z_k$.
\end{definition}

When   $n|k$ the expansion always exists.
For example for  $s = 3$, $n = 2$ and $k = 8$, 
 $\pexpansion{\pair{12}{0}} =
  \pair{12121212}{0}$.
  Notice that when $k=n$ then $\pexpansion{A}=A$.
 
\begin{observation} \label{observacion:prefijo-expansion} 

If  $n|k$ then for  every  pair $\A=\pair{A}{u}\in \Sigma^n\times \Z_k$, $A$ is a prefix of $\pexpansion{A}$.
\end{observation}

\begin{definition}[Lyndon pair]
Let $n$ and $k$ be positive integers. 
When   $k|n$, the Lyndon pairs are  the reductions of the \maximal pairs in  $\Sigma^{n}\times{\Z_k}$. 
When  $n|k$ with $n<k$, the  Lyndon pairs are the expansions  of the \maximal pairs $\pair{A}{0}\in \Sigma^{n}\times\Z_k$.
\end{definition}
Thus, when   $k|n$, the Lyndon pairs    are elements in $\bigcup_{p: k|p|n}\Sigma^{p}\times\Z_k$.
 But when $n|k$ the  Lyndon pairs    are elements in $\Sigma^{k}\times\Z_k$.

\begin{observation} \label{observacion:reduccion-maximo} 
Every Lyndon pair is 
strictly  $\succ$-greater
than each of its rotations.
\end{observation}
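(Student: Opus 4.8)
The plan is to handle separately the two regimes in the definition of a Lyndon pair, since they produce pairs of different shapes.

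\emph{The case $n\mid k$ with $n<k$} is essentially immediate. A Lyndon pair here is an expansion $\tilde{\A}=\pair{A^{k/n}}{0}\in\Sigma^{k}\times\Z_k$ of a maximal pair $\pair{A}{0}\in\Sigma^{n}\times\Z_k$, so its word has length $k$ and its necklace consists of the $\max(k,k)=k$ rotations indexed by $i=0,\dots,k-1$, the $i$-th having second component $i\bmod k$. For $0<i<k$ this second component is nonzero, and by the definition of $\succ$ a pair with second component $0$ is $\succ$-greater than any pair with nonzero second component; hence every nontrivial rotation of $\tilde{\A}$ is strictly $\succ$-smaller than $\tilde{\A}$. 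This settles the case, and it does not even use maximality beyond having second component~$0$.

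\emph{The case $k\mid n$} is the substantial one. A Lyndon pair is the reduction $\preduction{\A}=\pair{\preduction{A}}{0}\in\Sigma^{p}\times\Z_k$ of a maximal pair $\A=\pair{A}{0}$, where $p$ is minimal with $k\mid p$, $p\mid n$ and $A=(\preduction{A})^{n/p}$. I would argue in two steps. First, $\preduction{\A}$ is itself maximal: since $\A=(\preduction{\A})^{n/p}$ is maximal, Observation~\ref{obs:copias}, applied inside $\Sigma^{p}\times\Z_k$ (legitimate because $k\mid p$), gives that $\preduction{\A}$ is maximal. Being $\succ$-maximal in its necklace, $\preduction{\A}$ has no rotation strictly $\succ$-greater than itself, so the only thing left to rule out is a nontrivial rotation that \emph{equals} $\preduction{\A}$.

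This last point is the crux, and it amounts to showing that the $p$ rotations of $\preduction{\A}$ are pairwise distinct, as claimed after the definition of reduction. Suppose a rotation by $d$ with $0<d<p$ equals $\preduction{\A}$. Matching the second components forces $k\mid d$, and then the word $\preduction{A}$ is invariant under cyclic rotation by $d$, hence under rotation by $q:=\gcd(d,p)$; as $q\mid p$ this means $\preduction{A}=(\prefix{A}{q})^{p/q}$. But $k\mid d$ and $k\mid p$ give $k\mid q$, while $q\le d<p$ gives $q<p$, so $q$ is a strictly smaller admissible period with $q\mid n$ and $A=(\prefix{A}{q})^{n/q}$, contradicting the minimality of $p$ in the reduction. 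Hence no nontrivial rotation equals $\preduction{\A}$; together with maximality this shows $\preduction{\A}$ is strictly $\succ$-greater than all of its rotations, completing the proof.
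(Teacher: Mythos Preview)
Your proof is correct. The paper does not actually supply a proof of this observation; it is simply asserted. The content you prove in the $k\mid n$ case is precisely what the paper states without justification in the paragraph following the definition of reduction (``all the rotations of a reduced pair are pairwise different''), combined with Observation~\ref{obs:copias} to get maximality of the reduction. Your argument via $q=\gcd(d,p)$ and the minimality of $p$ makes that claim rigorous, and your treatment of the $n\mid k$ case correctly exploits that in $\Sigma^{k}\times\Z_k$ only the trivial rotation has second component~$0$. So your write-up fills in exactly the details the paper leaves implicit, along the route the paper evidently has in mind.
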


\subsection{The operator $\suboperator{}$}

We define the operator $\suboperator{}$ that given a pair in $\Sigma^n\times \Z_k$
 different from  $\pair{0^n}{0}$ but 
  with second component $0$,
  it defines another pair  in $\Sigma^n\times \Z_k$
with second component~$0$.

\begin{definition}[operator $\suboperator{}$]
Let $n$ and $k$ be positive integers such that  $k|n$ or $n|k$.
    For $\A=\pair{A}{0}=\pair{\word{a}{1}{n}}{0}$ in $\Sigma^n\times \Z_k$
such that  $a_i > a_{i+1} = \ldots  = a_n = 0$, 
we define the operator $\theta:\Sigma^n\times\Z_k\to \Sigma^n\times\Z_k$,
  \[ \suboperator{\pair{A}{0}} = \suboperatornext,\]
   where,

if  $n|k$  then $j=n$ and $q=1$. So,   $ \suboperator{\pair{A}{0}} = 
  \langle [A_{i-1}(a_i-1)(s-1)^{n-i},0\rangle$;

if $k|n$ and $n<k$
then $j$ is the smallest integer such that $k|j$ and $j\geq i$,
and  $q$ is the greatest integer such that $q\leq n/j$. 
Thus, in either case,  
$j = i + ((n - i) \mod k)$.
\end{definition}

The operator $\suboperator{}$  
is applicable on any pair with second component $0$, except for 
$\pair{0^n}{0}$.
For example, 
for $s=2$, $n=6$ and $k = 2$,   
$\suboperator{\pair{010000}{0}}=\pair{000000}{0}$;
$\suboperator{\pair{011000}{0}}=\pair{010101}{0}$;
$\suboperator{\pair{011101}{0}}=\pair{011100}{0}$.

\begin{definition}[value $T$]
Let  $T$ be the integer such that $\suboperator{}^T\pair{(s-1)^n}{0}=\pair{(0^n}{0})$.
\end{definition}

\begin{lemma}\label{lemma:decreciente}
The list $(\suboperator{}^i \pair{(s-1)^n}{0})_{0\leq i\leq T}$ 
is strictly decreasing in $\succ$.
\end{lemma}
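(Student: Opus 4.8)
The plan is to reduce the statement to a single-step comparison. First I would note that the operator $\theta$ sends a pair with second component $0$ to a pair with second component $0$, so every term $\theta^i\pair{(s-1)^n}{0}$ in the list has second component $0$. By the definition of $\succ$, two pairs with equal second component $0$ compare under $\succ$ exactly as their first components compare under the decreasing lexicographic order $>$. It therefore suffices to show that each single application of $\theta$ strictly decreases the first component in $>$; the strict monotonicity of the whole list then follows by transitivity along $i=0,1,\dots,T$.

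So the heart of the argument is to show, for every applicable $\A=\pair{A}{0}$ with $A=\word{a}{1}{n}$ and $a_i>a_{i+1}=\dots=a_n=0$, that $A$ is lexicographically strictly greater than the first component of $\theta(\A)$. Recall $\theta(\A)=\pair{[\prefix{A}{i-1}(a_i-1)(s-1)^{j-i}]^{q}\prefix{A}{n-qj}}{0}$, and set $B=\prefix{A}{i-1}(a_i-1)(s-1)^{j-i}$, a word of length $j$. I would first record the two structural facts that make the comparison meaningful: $j\geq i$, so that position $i$ lies inside $B$; and $q\geq 1$, so that a full first copy of $B$ really occurs as a prefix of the first component of $\theta(\A)$.

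Granting these, the comparison is a direct symbol-by-symbol matching. The first copy of $B$ occupies positions $1,\dots,j$; it agrees with $A$ on positions $1,\dots,i-1$, where both read $\word{a}{1}{i-1}$; and at position $i$ it carries $a_i-1$, which is strictly smaller than the symbol $a_i$ that $A$ carries there. Since $i\leq j\leq qj$, this disagreement falls inside the first block, so it is the first position where the two words differ. Hence $A$ is lexicographically greater than the first component of $\theta(\A)$, which gives $\A\succ\theta(\A)$, and the lemma follows.

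The only point requiring care --- and thus the main, albeit mild, obstacle --- is the bound $q\geq 1$, equivalently $j\leq n$. When $n\mid k$ it is immediate, since $j=n$ and $q=1$. When $k\mid n$, the integer $j$ is the least multiple of $k$ with $j\geq i$; because $i\leq n$ and $n$ is itself a multiple of $k$, we get $j\leq n$, whence $q=\lfloor n/j\rfloor\geq 1$. With $j\geq i$ and $q\geq 1$ secured, the single-step decrease, and therefore the strict monotonicity of the list, are established.
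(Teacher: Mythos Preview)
Your proof is correct and follows essentially the same approach as the paper: reduce to the single-step comparison $\A\succ\theta\A$, observe that both pairs have second component $0$ so the comparison is just lexicographic on the first components, and then note that the first $i-1$ symbols agree while position $i$ carries $a_i$ versus $a_i-1$. Your treatment is in fact more careful than the paper's, which leaves the verification $q\geq 1$ (equivalently $j\leq n$) implicit; you spell out why the first copy of $B$ really occurs as a prefix so that position $i$ is where the comparison is decided.
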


\begin{proof}
Let's see that for every pair  $\A=\pair{A}{0}$,
$           \A \succ \suboperator{\A}$.
Using the definition
of $\suboperator{}$,
    \[
        \pair{A}{0} \succ \suboperatornext.
    \]
Since both pairs have second component $0$, there is some  $i$ such that
    \[
        \word{a}{1}{i} 
        = \prefix{A}{i-1} a_i 
        > \prefix{A}{i-1}(a_i - 1).
    \]
We conclude that $(\suboperator{}^i \pair{(s-1)^n}{0})_{0\leq i\leq T}$ is strictly decreasing in  $\succ$.
\end{proof}

\begin{observation}\label{obs:bijection}
When $n|k$ the operator~$\theta$   yields a bijection between \maximal pairs.
 \end{observation}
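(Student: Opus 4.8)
The plan is to first pin down exactly which pairs are \maximal when $n\mid k$, and then to recognize that on this set the operator $\theta$ is nothing but the predecessor function of a finite linearly ordered set, for which bijectivity is immediate. The key simplification is that when $n\mid k$ the operator reduces to $\theta\pair{A}{0}=\pair{A_{i-1}(a_i-1)(s-1)^{n-i}}{0}$, where $i$ is the position of the last nonzero symbol of $A$.

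First I would show that the \maximal pairs are precisely those with second component $0$, that is, the set $\{\pair{A}{0}:A\in\Sigma^n\}$. One inclusion is the Observation stating that when $n\mid k$ every $\pair{A}{0}$ is \maximal. For the converse, given a \maximal pair $\pair{B}{v}$, I would note that because $n\mid k$ its necklace has $\max(n,k)=k$ rotations whose second components run through all of $\Z_k$; hence some rotation has the form $\pair{A}{0}$. If $v\neq 0$ then $\pair{A}{0}\succ\pair{B}{v}$ by the definition of $\succ$, contradicting \maximality, so $v=0$. Thus \maximality is equivalent to having second component $0$, and the \maximal pairs are in obvious bijection with $\Sigma^n$ via $A\leftrightarrow\pair{A}{0}$.

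Next I would observe that $\theta$ respects this set: since its output again has second component $0$, it sends \maximal pairs to \maximal pairs, and by Lemma \ref{lemma:decreciente} it strictly decreases them, so $\theta\pair{A}{0}\prec\pair{A}{0}\preceq\pair{(s-1)^n}{0}$. Hence $\theta$ is defined on all \maximal pairs except $\pair{0^n}{0}$, and its image omits $\pair{(s-1)^n}{0}$. Reading off the first components, $\theta$ is exactly the operation that replaces the word $A$ by its immediate lexicographic predecessor.

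Finally, to obtain the bijection I would exhibit the inverse explicitly. Given a word $B=b_1\cdots b_n\neq(s-1)^n$, let $i$ be the last position with $b_i\neq s-1$ and set $\sigma\pair{B}{0}=\pair{b_1\cdots b_{i-1}(b_i+1)0^{n-i}}{0}$, the immediate lexicographic successor. A direct check that $\theta\circ\sigma$ and $\sigma\circ\theta$ are the identity on the appropriate domains then shows that $\theta$ restricts to a bijection from the \maximal pairs other than $\pair{0^n}{0}$ onto the \maximal pairs other than $\pair{(s-1)^n}{0}$. The only point needing care—the main obstacle, such as it is—is the endpoint bookkeeping together with verifying that the position $i$ recovered from $\theta\pair{A}{0}$ is unambiguous; this holds because the decremented symbol $a_i-1\leq s-2$ is the last symbol of the output that is not $s-1$, so inversion is forced.
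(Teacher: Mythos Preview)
Your proposal is correct and takes essentially the same approach as the paper: both identify the \maximal pairs with $\{\pair{A}{0}:A\in\Sigma^n\}$ when $n\mid k$ and then argue that on this set $\theta$ is the lexicographic predecessor, hence invertible away from the endpoints. Your version is more explicit---you actually write down the inverse $\sigma$ and verify the endpoint bookkeeping---whereas the paper simply asserts that $\theta$ ``goes through all the pairs of the form $\pair{A}{0}$ in $\succ$-decreasing order'' and ``can be used backwards.''
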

\begin{proof}
Every  pair of the form $\pair{A}{0}$ in $\Sigma^n\times \Z_k$  is \maximal because,  when $n$ divides  $k$, there is just this unique rotation with second component $0$. 
The definition of  $\theta$  ensures that  $\theta$ goes through all the pairs of the form $\pair{A}{0}$
in $\succ$-decreasing order.
The operator $\theta$ can be used forward for every pair   $\pair{A}{0}$  except for $\langle 0^n,0\rangle$, and it can be used  backwards for every pair   $\pair{A}{0}$ except for  $\langle 0^n,0\rangle$. Thus, except for the extremes, we can obtain the successor and the predecessor of a maximal pair in the order  $\succ$.
\end{proof}

When $k|n$ and $k<n$  the operator $\theta$ 
 is not 
injective nor surjective over pairs with second component~$0$.
For example, for $s = 2$, $n = 4$ and $k = 2$, we see $\theta$ is not injective because 
$\suboperator{\pair{0100}{0}} = \pair{0000}{0}$
and also
$\suboperator{\pair{0001}{0}} = \pair{0000}{0}$.
To see that $\theta$ is not surjective  observe that  the pair  $\B=\pair{1011}{0}$ is not in the image of $\suboperator{}$,
because there is no pair $\A$ such that  $\suboperator{\A} = \B$.

It is possible to construct  the reverse of list $(\suboperator{}^i \pair{(s-1)^n}{0})_{0\leq i\leq T}$.
In case $n|k$ the operator~$\theta$ defines a bijection between \maximal pairs.
In case $k|n$ and $k<n$, 
$\suboperator{}$ is not injective, 
there are pairs that have more than one preimage by  $\suboperator{}$.
However,  except for $\pair{(s-1)^n}{0}$ every element has one predecessor 
in the list $(\suboperator{}^i(\pair{s-1)^n}{0})_{0\leq i\leq T}$,
which is just one of the possible preimages by~$\suboperator{}$. 

\begin{lemma}\label{lemma:preimage}
Let $n$ and $k$ positive integers such that  $k|n$. 
Every element $\A=\pair{A}{0}$ in \linebreak
$(\suboperator{}^i \pair{(s-1)^n}{0})_{0\leq i\leq T}$, except 
$\pair{(s-1)^n}{0}$, 
has a  predecessor  which is
the preimage of $\A$ by $\suboperator{}$  given by
\[
\pair{\prefix{A}{u - 1} (a_{u} + 1)) 0^{n - u}}{0}
\]
\text{where } 
\[
A=\left(
\prefix{A}{u} (s-1)^{r - u}
\right)^{w} \prefix{A}{v} 
\]
$a_u < (s-1)$, and
$r$ is the smallest multiple of  $k$
such that $v < r$ and $r-k \leq u \leq r$.
\end{lemma}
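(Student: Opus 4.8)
The plan is to exhibit the predecessor as a specific preimage of $\A$ under $\suboperator{}$ and then to single that preimage out. Write $\A=\pair{A}{0}$ with $A=\word{a}{1}{n}$ and let $\B=\pair{\prefix{A}{u-1}(a_u+1)0^{n-u}}{0}$ be the pair named by the formula, where $u,r,w,v$ are read from the decomposition $A=(\prefix{A}{u}(s-1)^{r-u})^{w}\prefix{A}{v}$ under the stated side conditions. First I would check that these data are well defined. Existence is immediate: since $\A$ lies in the list and is not its $\succ$-greatest element, $\A=\suboperator{\C}$ for some $\C$, and the definition of $\suboperator{}$ in the case $k\mid n$ already displays $A$ in this periodic shape with block length a multiple of $k$. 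For uniqueness, the condition $a_u<s-1$ forces $u$ to be the last position inside the initial block carrying a symbol different from $s-1$, while choosing $r$ to be the smallest qualifying multiple of $k$ fixes the block length; then necessarily $w=\lfloor n/r\rfloor$ and $v=n-wr$, so $\B$ is unambiguous.

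Next I would verify the identity $\suboperator{\B}=\A$ by unwinding the definition of $\suboperator{}$. The last nonzero symbol of $\B$ occupies position $u$ and equals $a_u+1>0$, followed by zeros, so $\suboperator{}$ lowers it to $a_u$ and pads with $s-1$ up to the smallest multiple of $k$ that is at least $u$; by the constraint $r-k\leq u\leq r$ with $k\mid r$ this multiple is exactly $r$, so the block rebuilt by $\suboperator{}$ is $\prefix{A}{u-1}a_u(s-1)^{r-u}=\prefix{A}{u}(s-1)^{r-u}$. A short arithmetic check completes the computation: because $k\mid n$ and $k\mid r$, the remainder $v=n-wr$ is again a multiple of $k$ with $v<r$, hence $v\leq r-k<u$; this inequality is precisely what makes $\prefix{A}{v}$ a genuine prefix of $\B$, so the trailing word appended by $\suboperator{}$ is $\prefix{A}{v}$ and exactly $w=\lfloor n/r\rfloor$ copies of the block are produced. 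Therefore $\suboperator{\B}=\A$, and it is here, through the inequality $v<u$, that the hypothesis $k\mid n$ enters.

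It remains to prove that this preimage is the \emph{immediate} predecessor in the list, and this is the step I expect to be the main obstacle, because $\suboperator{}$ is not injective and $\A$ has one preimage for each admissible block length. The route I would take is through maximality. Since the list starts at the \maximal pair $\pair{(s-1)^n}{0}$ and $\suboperator{}$ sends \maximal pairs to \maximal pairs, a fact I would derive from Lemma~\ref{teo:es-collar} and Observation~\ref{obs:copias}, every element of the list is \maximal; in particular the predecessor of $\A$ is a \maximal preimage. I would then compare the preimages $\pair{\prefix{A}{u'-1}(a_{u'}+1)0^{n-u'}}{0}$: a smaller block length yields a strictly smaller branch position $u'$, and shrinking $u'$ strictly raises the pair in $\succ$, since two such candidates agree on the prefix $\prefix{A}{u'-1}$ and the one with smaller $u'$ carries the strictly larger symbol at the first position where they differ. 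Thus the formula produces the $\succ$-greatest preimage, and the decisive point is to show that this $\succ$-greatest preimage is \maximal while every preimage coming from a larger block length fails to be \maximal, admitting a rotation by a multiple of $k$ that is lexicographically larger. Establishing that the $\succ$-greatest preimage is the unique \maximal one, using that $\A$ itself is \maximal, identifies it with the predecessor and finishes the proof.
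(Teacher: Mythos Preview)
Your verification that $\suboperator{\B}=\A$ is fine, but the step you flag as decisive is where the argument breaks. You assert that $\suboperator{}$ sends \maximal pairs to \maximal pairs, so that every element of the list $(\suboperator{}^i\pair{(s-1)^n}{0})_{0\le i\le T}$ is \maximal, and then you plan to identify the predecessor as the unique \maximal preimage. This is false. Take $s=2$, $n=6$, $k=2$ and $\A=\pair{101000}{0}$, which is \maximal. Here $i=3$, $j=4$, $q=1$, so $\suboperator{\A}=\pair{100110}{0}$; but the rotation by $4$ gives $\pair{101001}{0}\succ\pair{100110}{0}$, so $\suboperator{\A}$ is not \maximal. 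Lemma~\ref{teo:es-collar} only tells you that the single block $\prefix{A}{i-1}(a_i-1)(s-1)^{j-i}$ of length $j$ is \maximal in $\Sigma^{j}\times\Z_k$, and Observation~\ref{obs:copias} extends this to $[\cdot]^q$ in $\Sigma^{qj}\times\Z_k$; neither controls the full word $[\cdot]^q\prefix{A}{n-qj}$ of length $n$ when $n-qj>0$. Consequently the list does contain non-\maximal pairs (this is exactly why the list $\M$ is defined by \emph{removing} them), and the predecessor of $\A$ in the $\suboperator{}$-list need not be \maximal. Your selection principle collapses.

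The paper's route avoids this entirely and is much shorter: it does not try to pick the predecessor out of the set of preimages. Instead it starts from the fact that the predecessor $\B$ exists (because $\A$ is not the first element), writes $\B=\pair{\prefix{B}{i-1}b_i0^{n-i}}{0}$ with $b_i>0$, and reads the equation $\suboperator{\B}=\A$ as
\[
[\prefix{B}{i-1}(b_i-1)(s-1)^{j-i}]^{q}\prefix{B}{n-qj}=A,
\]
with $j$ the least multiple of $k$ at least $i$ and $q=\lfloor n/j\rfloor$. Matching this against the stated factorisation of $A$ forces $r=j$, $u=i$, $w=q$, $v=n-qj$ and $\prefix{B}{i}=\prefix{A}{i-1}(a_i+1)$, which is precisely the formula. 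In other words, the formula is obtained by \emph{solving} $\suboperator{\B}=\A$ for the unique $\B$ already known to lie in the list, not by comparing preimages. If you want to keep your direction, you would have to replace the maximality argument by a direct proof that among all preimages of $\A$ the one given by the formula is the only one belonging to the $\suboperator{}$-orbit of $\pair{(s-1)^n}{0}$; but that is essentially the same computation the paper does, and the detour through maximality adds nothing.
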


\begin{proof}
First notice that this factorization always exists
\[
A=(\prefix{A}{r})^{w} \prefix{A}{v} = (\prefix{A}{u} (s-1)^{r-u})^{w} \prefix{A}{v}.
\]
If $w=1$ the  $r = n$, $v = 0$ and $A = \prefix{A}{n} = \prefix{A}{r} = \prefix{A}{u} (s-1)^{r-u}$.
Let $\B=\pair{B}{0}$
be the pair obtained by undoing the transformation done by the $\theta$ operator, knowing that  $\A$ and $\B$ are in  $(\suboperator{}^i \pair{(s-1)^n}{0})_{0\leq i\leq T}$,
\[
 \suboperator{\B} = \suboperatorletternext{B} = \A
\]
where  $i$ satisfies  $b_{i+1} = \ldots  b_{n} = 0$, and $i\leq j$ and $k|j$.
Thus,
\[
[B_{i-1}(b_i-1)(s-1)^{j-i}]^qB_{n-qj}=
(\prefix{A}{u} (s-1)^{r-u})^{w} \prefix{A}{v}.
\]
The word  $B$ is determined by
$r = j,
u = i,
w=q,
v = n - qj$ and $A_{i-1}(a_i+1) =B_i$.
%
%
Then,
\begin{eqnarray*}
\pair{\prefix{B}{i - 1} b_{i} 0^{n - j}}{0} = \pair{\prefix{A}{u - 1} (a_{u} + 1)) 0^{n - u}}{0},
\\
\suboperatorletternext{B} = [\prefix{A}{u - 1} a_u (s-1)^{r-u}]^{w} \prefix{A}{v}.
\end{eqnarray*}\end{proof}

\begin{lemma}\label{lemma:1}
If $\A \succ \B \succ \C$, with $\C = \suboperator{\A}$, then 
 $\B$  is not \maximal.
\end{lemma}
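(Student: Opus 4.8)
The plan is to reduce to pairs with second component $0$, read off the precise shape of $B$ from the two strict inequalities, and then exhibit a single rotation of $B$ by a multiple of $k$ that is lexicographically larger, so that $\B$ fails to be \maximal. First I would note that $\B$ must have second component $0$: both $\A=\pair{A}{0}$ and $\C=\suboperator{\A}=\pair{C}{0}$ have second component $0$, and by the definition of $\succ$ any pair with nonzero second component is $\succ$-below every pair with second component $0$; hence $\A\succ\B\succ\C$ forces $\B=\pair{B}{0}$, and the three pairs give $A>B>C$ in the decreasing lexicographic order on words.

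Next I would pin down $B$. Let $i$ be the last nonzero position of $A$, so $A=\prefix{A}{i-1}a_i0^{n-i}$, and recall from the definition of $\suboperator{}$ that $C=P^q\prefix{A}{n-qj}$ with block $P=\prefix{A}{i-1}(a_i-1)(s-1)^{j-i}$ of length $j$, where $k\mid j$. Because $A$ vanishes past position $i$, the first coordinate at which $A$ exceeds $B$ cannot lie beyond $i$; and a first difference before $i$ would make $B$ smaller than $C$ (which agrees with $A$ on $\prefix{A}{i-1}$), contradicting $B>C$. So the first difference of $A$ and $B$ is exactly at $i$, and since $c_i=a_i-1$ and $b_i<a_i$ the inequality $B>C$ forces $b_i=a_i-1$. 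As $C$ carries the letter $s-1$ in positions $i+1,\dots,j$, the inequality $B>C$ then forces $\prefix{B}{j}=P$. Writing $B=P\,S$ with $S=\word{b}{j+1}{n}$, the inequality $B>C$ becomes $S>R$, where $R=P^{q-1}\prefix{A}{n-qj}$ is the tail of $C$.

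Finally I would build the witnessing rotation. Let $d$, with $1\le d\le q$, be the largest integer such that $B$ begins with $P^{d}$ (so $d\ge1$ by the previous step), and write $B=P^{d}T$; from $B>C$ one gets $T>P^{q-d}\prefix{A}{n-qj}$. If $d<q$, maximality of $d$ makes the first $j$ letters of $T$ differ from $P$ while $T$ exceeds a word beginning with $P$, so the first discrepancy occurs inside the initial block and is a strict increase there; rotating $B$ by $dj$ positions then gives $TP^{d}>P^{d}T=B$. If $d=q$, then $T>\prefix{A}{n-qj}$, and comparing $\prefix{A}{n-qj}$ with the length-$(n-qj)$ prefix of $P$ (they coincide up to position $i-1$, while $A$ shows the larger letter $a_i$ at position $i$, against $a_i-1$ in $P$, whenever that position is reached) gives $T>\prefix{P}{n-qj}$, so rotating $B$ by $qj$ positions again gives $TP^{q}>B$. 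In either case the rotation amount is a multiple of $j$, hence of $k$, so $B$ has a rotation whose word is lexicographically greater while the second component stays $0$; by the definition of $\succ$ this rotation is $\succ$-greater than $\B$, and therefore $\B$ is not \maximal.

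The main obstacle will be this last paragraph. One must peel off exactly the maximal power $P^{d}$ and separate the generic case $d<q$, where the periodicity visibly breaks inside a block, from the boundary case $d=q$, where the relevant comparison is instead against the short tail $\prefix{A}{n-qj}$ and hinges on the single decremented letter $a_i-1$ of $P$ versus $a_i$ in $A$. By contrast, checking that every rotation amount is a multiple of $k$ is immediate once one observes that it is a multiple of the block length $j$, and the second-component bookkeeping is then routine.
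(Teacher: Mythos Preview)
Your proof is correct. Both you and the paper first establish that $\prefix{B}{j}=P=\prefix{A}{i-1}(a_i-1)(s-1)^{j-i}$ from the two inequalities $A>B>C$; after that point the arguments diverge. The paper proceeds by contradiction: assuming $\B$ is \maximal, it uses the inequality $B\ge \word{b}{j+1}{n}\prefix{B}{j}$ together with $B>C$ to force $b_{j+1}=a_1$, $b_{j+2}=a_2$, and so on block by block, until $B=C$, contradicting $\B\succ\C$. Your argument is instead constructive: you never invoke maximality of $\B$ but peel off the largest power $P^{d}$ at the front of $B$ and exhibit the rotation by $dj$ (a multiple of $k$) as a strictly $\succ$-larger pair. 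The paper's route is a touch shorter once the block structure is in place, since it avoids the $d<q$ versus $d=q$ case split; your route has the advantage of producing an explicit witnessing rotation and of proving the slightly stronger statement that any $\B$ strictly between $\A$ and $\suboperator{\A}$ admits a larger rotation by a multiple of $j$. You also justify that $\B$ must have second component $0$, a point the paper's proof takes for granted.
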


\begin{proof}
Let $\A=\pair{A}{0}$, $\B=\pair{B}{0}$, $\C=\suboperator{\A} = \pair{C}{0}$
and  indices  $i,j$  such that
\begin{eqnarray*}
A &= & \word{a}{1}{i} 0^{n-i}, \text{with $a_i > 0$},
\\
B &= & \word{b}{1}{n},
\\
C &= & \word{c}{1}{n} = [\word{a}{1}{i-1} (a_i - 1) (s - 1)^{j - i}]^{q} \word{a}{1}{n-qj}.
\end{eqnarray*}
Assume  $\A \succ \B \succ \C$ and, by way of contradiction  suppose   $\B$ is a \maximal pair.
Since $\A \succ \B$,
\[
b_1 = a_1,\quad
b_2 = a_2,\quad 
\ldots,\quad 
b_{i-1} = a_{i-1},
\]
and it should be 
$a_i > b_i$.
Since 
$\B \succ \C$
then  $b_i \geq c_i = a_i - 1$; hence,  $b_i = a_i - 1$.
Then, 
we have
$\prefix{B}{i} = \prefix{C}{i}$. 
And for 
$\ell=1, \ldots j-i$ we have
$b_{i + \ell} = s - 1$.
This is because    $\B \succ \C$ and $\word{c}{i+1}{i+\ell} = (s - 1)^{j - i}$, 
then
$\word{b}{i+1}{i+\ell} =  (s - 1)^{j - i}$,
given that  $s-1$ is  the lexicographically greatest symbol.

We now show that the other symbols in  $\B$ and $\C$ also coincide. 
Since $\B$ is a \maximal pair,  $B \geq \word{b}{j+1}{n} \word{b}{1}{j}$ and we know that  $a_1 = b_1 \geq b_{j+1}$. 
Since $\B \succ \C$ we have $b_{j+1} \geq c_{j+1} = a_1$, hence $b_{j+1} = a_1$.
Repeating this argument we obtain for
$m=1,\ldots ,q$ and $p=1,\ldots ,i-1$,

     $b_{mj+p} = a_p$,  for  $mj+p \leq n$,\nopagebreak
    
    $b_{mj+i} = a_i - 1$  and \nopagebreak
    
    $b_{mj + i + \ell} = s - 1$ for  $\ell=1, \ldots ,  j - i$.
\\
This would imply
$\B = \C = \suboperator{\A}$.
Thus, there is no \maximal pair $\B$  such that  $\A \succ \B \succ \C$.
\end{proof}

\section{Statement of Theorem~\ref{thm:perfectp}}

\begin{definition}[list $\M$ of  \maximal pairs of $\Sigma^n\times \Z_k$]
Let $n$ and $k$ be positive integers such that $n|k$ or $k|n$.
Define $\M$ by removing from 
$(\theta^i \pair{(s-1)^n}{0})_{0\leq i\leq T }$
the pairs that are not maximal.
In case $n|k$ all the elements are maximal, none is removed.
In case $k|n$ with $k<n$, for each pair $\A$ in $(\theta^i \pair{(s-1)^n}{0})_{0\leq i\leq T }$, except  $\A=\pair{0^n}{ 0}$, 
remove 
$\suboperator{}\A$, $\suboperator{}^2\A, \ldots \suboperator^{h-1}\A$ 
where 
 $h$ is the   least such that
 $\suboperator{}^h(\A)$ is \maximal.  
 Let $M$ be the number of elements of the list~$\M$.
 \end{definition}

\begin{definition}[list $\L$ of Lyndon pairs]
Let $n$ and $k$ be  positive integers such that $k|n$ or $n|k$.
We define the list $\L$ as the list of  Lyndon pairs of the elements in the list $\M$.
Since $\M$ has $M$ elements, $\L$ has $M$ elements as well.
 \end{definition}

Recall that a necklace is a circular word and a  necklace is $(n,k)$-perfect if  all words of length~$n$ occur~$k$ times but at positions with 
different congruence modulo~$k$, for any convention of the starting position.

\begin{theorem}
\label{thm:perfectp}
Let $n$ and $k$ be  positive integers such that $k|n$ or $n|k$.
 Let $X$ be the concatenation of all the words in the  Lyndon pairs
 in the order given by $\L$.
Then, $X$ is  the lexicographically greatest
$(n,k)$-perfect necklace.
\end{theorem}

Here is an example for   $s = 2$, $n = 6$ and $k = 2$.
The lexicographically greatest $(n,k)$-perfect necklace is obtained by concatenating the following words (the symbol $\mid$ is   for ease of reading):

\begin{center}\small
        $11 \mid 111110 \mid 111101 \mid 111100 \mid 111010 \mid 111001 \mid 111000 \mid 110110 \mid 110101 \mid 110100 \mid 110010 \mid 110001 \mid 110000 \mid 10 \mid 101001 \mid 101000 \mid 100101 \mid 100100 \mid 100001 \mid 100000 \mid 01 \mid 010100 \mid 010000 \mid 00$.
\end{center}

\section{Proof of Theorem~\ref{thm:perfectp}}

We divide the proof of Theorem~\ref{thm:perfectp} in three parts:

in Section~\ref{sec:1} we show  that length of $X$ is $s^nk$, 

in Section~\ref{sec:2} we prove that $X$ is $(n,k)$-perfect,  and 

in Section~\ref{sec:3} we show that $X$ is the lexicographically greatest $(n,k)$-perfect necklace.
\\
We  start with two  lemmas about the list $\M$ of maximal pairs.

\begin{lemma}\label{lemma:lista}
The list $\M$ 
 starts with the $\succ$-greatest pair $\pair{(s-1)^n}{0}$, ends with the $\succ$-smallest pair $\pair{0^n}{0}$, and contains all \maximal pairs in strictly decreasing $\succ$-order.
\end{lemma}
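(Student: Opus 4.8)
The plan is to regard $\M$ as the sublist of $\L:=(\suboperator{}^i\pair{(s-1)^n}{0})_{0\le i\le T}$ obtained by deleting every non-\maximal pair, and to verify the three assertions in turn: strict $\succ$-monotonicity, the two endpoints, and completeness. Strict monotonicity is immediate: by Lemma~\ref{lemma:decreciente} the full list $\L$ is strictly decreasing in $\succ$, and deleting entries from a strictly decreasing finite sequence leaves a strictly decreasing sequence, so $\M$ inherits the order.

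Next I would pin down the endpoints, which works uniformly in both cases $k\mid n$ and $n\mid k$. The first entry of $\L$ is $\pair{(s-1)^n}{0}$, the $\succ$-greatest pair in $\Sigma^n\times\Z_k$, hence trivially \maximal; it therefore survives deletion and opens $\M$. The last entry of $\L$ is $\pair{0^n}{0}$ by the definition of $T$; its rotation class consists only of the pairs $\pair{0^n}{w}$ with $w\in\Z_k$, and among these the one with second component $0$ is $\succ$-greatest, so $\pair{0^n}{0}$ is \maximal and closes $\M$.

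The heart of the matter is completeness: every \maximal pair occurs in $\L$, hence in $\M$, since \maximal pairs are never deleted. When $n\mid k$ this is handed to us by the observation that $\theta$ is a bijection running through all pairs $\pair{A}{0}$ — all of which are \maximal — in $\succ$-decreasing order, so $\M=\L$ already lists them. The case $k\mid n$ is where the work lies. I would first record that every \maximal pair has second component $0$: since $k\mid n$, the $n$ rotations of any pair realize each residue modulo $k$ exactly $n/k$ times as second component, and any component-$0$ pair is $\succ$-above every pair with nonzero component, so the $\succ$-greatest representative of a rotation class sits at component $0$. Consequently each \maximal pair $\B=\pair{B}{0}$ satisfies $\pair{(s-1)^n}{0}\succeq\B\succeq\pair{0^n}{0}$, the extremes being precisely the largest and smallest component-$0$ pairs. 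Now suppose, for contradiction, that some \maximal pair $\B$ is absent from $\L$. Then $\B$ differs from both endpoints and lies strictly between them; since $\succ$ is a total order and $\L$ is a finite strictly decreasing chain from $\pair{(s-1)^n}{0}$ to $\pair{0^n}{0}$, taking $\A$ to be the last entry with $\A\succ\B$ yields a consecutive pair $\A=\suboperator{}^i\pair{(s-1)^n}{0}$ and $\C=\suboperator{}^{i+1}\pair{(s-1)^n}{0}=\A\suboperator{}$ with $\A\succ\B\succ\C$. But then Lemma~\ref{lemma:1} forces $\B$ to be non-\maximal, a contradiction. Hence every \maximal pair belongs to $\L$, and combining the three properties shows that $\M$ lists exactly the \maximal pairs, in strictly decreasing $\succ$-order, from $\pair{(s-1)^n}{0}$ down to $\pair{0^n}{0}$.

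I expect the main obstacle to be this completeness step for $k\mid n$. The delicate points are, first, the auxiliary fact that \maximal pairs always carry second component $0$ (so that a hypothetical missing \maximal pair genuinely lies in the component-$0$ interval covered by $\L$), and second, the trapping argument that places such a pair strictly between two consecutive list entries so that Lemma~\ref{lemma:1} becomes applicable. Everything else — monotonicity and the endpoint verifications — is routine once these are in place.
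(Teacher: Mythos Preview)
Your proof is correct and follows essentially the same route as the paper: monotonicity from Lemma~\ref{lemma:decreciente}, endpoint checks, and completeness via the trapping argument that places a hypothetical missing \maximal pair strictly between two consecutive entries of the $\theta$-list so that Lemma~\ref{lemma:1} applies. The only difference is cosmetic: you make explicit the auxiliary fact that every \maximal pair has second component~$0$ (which the paper leaves implicit, since the $\succ$-order automatically ranks component-$0$ pairs above all others), and you phrase the trapping directly in terms of consecutive entries of the full list rather than of $\M$, which is slightly cleaner but equivalent.
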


\begin{proof}[Proof of Lemma~\ref{lemma:lista}] 
{\em Case $k \mid n, k<n$}.
\begin{itemize}

\item By definition, the list  $\M$ starts with  $\pair{(s-1)^n}{0}$.
    
\item The \maximal pairs are in $\succ$-decreasing order:
    This is because the list $\M$ is constructed by successive applications of the  operator $\suboperator{}$ and by  Lemma~\ref{lemma:decreciente}  the list is strictly  $\succ$-decreasing.
    
\item
No  \maximal pair is missing:
Suppose $\A$ is in $\M$ and let $ h$ be the least such that $\suboperator{}^h(\A)$ is \maximal. To argue by contradiction, suppose  there is 
    a \maximal pair $\B$ such that $\A\succ \B\succ \suboperator{}^h(\A)$. 
Then,  there is  $i$,  $0\leq i< h$, such that
\[
\A \succ \suboperator{\A} \succ \ldots \succ \suboperator{}^{i}{\A} \succ \B \succ \suboperator{}^{i+1}{\A} \succ\ldots \succ \suboperator{}^{h-1}{\A} \succ \suboperator{}^{h}{\A}.
\]\nopagebreak\samepage
Thus, $\B$ occurs in between  some  $\D$ and  $\suboperator{\D}$.
By Lemma~\ref{lemma:1}  this is impossible.
    
\item The list $\M$ ends with  $\pair{0^n}{0}$:
There is no $\A$ such that  $\pair{0^n}{0}\succ \A$, 
and the list $\L$ is strictly  $\succ$-decreasing. 
By Lemma~\ref{lemma:decreciente}, the operator 
 $\suboperator{} $ applies to any pair  except   $\pair{0^n}{0}$.
\end{itemize}

{\em Case $n \mid k$, $n\leq k$.} It is the same proof as in the previous case, but simpler because $\suboperator{}$ yields exactly all the  \maximal pairs in $\succ$-decreasing order.
\end{proof}

\begin{example}
 Let $s = 2$, $n = 6$ and $k = 2$. 
The list $\M$ of \maximal pairs is:

 \noindent
      $\pair{111111}{0},
        \pair{111110}{0},
        \pair{111101}{0},
        \pair{111100}{0},
        \pair{111010}{0},
        \pair{111001}{0},
       \pair{111000}{0},
       \pair{110110}{0},
     \\  \pair{110101}{0},
        \pair{110100}{0},
        \pair{110010}{0},
        \pair{110001}{0},
        \pair{110000}{0},
        \pair{101010}{0},
        \pair{101001}{0},
      \pair{101000}{0},
  \\     \pair{100101}{0},
        \pair{100100}{0},
        \pair{100001}{0},
        \pair{100000}{0},
        \pair{010101}{0},
       \pair{010100}{0},
        \pair{010000}{0},
        \pair{000000}{0}$.
\end{example}

The next is the key lemma.

\begin{lemma}\label{lemma:vecinos} 
Assume $k|n$. If $\A=\langle A,0\rangle$ is followed by  $\B=\langle B,0\rangle$ in the list of \maximal pairs $\M$ then $A$ is a prefix of $\wordconcat{\preduction{A}}{\preduction{B}}$.
\end{lemma}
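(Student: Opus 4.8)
The plan is to exploit that, by Lemma~\ref{lemma:lista}, the list $\M$ contains \emph{all} \maximal pairs in strictly decreasing $\succ$-order; hence $\B$ is exactly the $\succ$-greatest \maximal pair satisfying $\B\prec\A$. I would first dispose of the easy cases. When $\A$ is already reduced (in particular whenever $n\mid k$) we have $\preduction{A}=A$, and $A$ is a prefix of $\preduction{A}\,\preduction{B}=A\,\preduction{B}$ for length reasons. So the content is the case $k\mid n$ with $\A$ non-reduced. Writing $P=\preduction{A}$, $p=\modulo{P}$ and $m=n/p\geq 2$, we then have $A=P^{m}$ with $P$ reduced and, by Observation~\ref{obs:copias}, $\pair{P}{0}$ is \maximal as well.

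The crux is the auxiliary claim that $\pair{P^{m-1}0^{p}}{0}$ is itself a \maximal pair. Granting it, the proof runs as follows. This pair belongs to $\M$, and since $P\neq 0^{p}$ we have $P^{m}>P^{m-1}0^{p}$, so $\A\succ\pair{P^{m-1}0^{p}}{0}$. Because $\B$ is the $\succ$-greatest \maximal pair below $\A$, this forces
\[
\pair{P^{m-1}0^{p}}{0}\preceq\B\prec\A.
\]
All three pairs have second component $0$, so comparing first components gives $P^{m-1}0^{p}\leq B<P^{m}=P^{m-1}P$ in the lexicographic order. A short analysis of the first position where $B$ might disagree with $P^{m-1}$ then shows $B$ begins with $P^{m-1}$: a larger symbol there would push $B$ above $P^{m-1}P=A$, and a smaller one would push $B$ below $P^{m-1}0^{p}$, both impossible.

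It remains to transfer this prefix to the reduction $\preduction{B}$. Write $B=\preduction{B}^{\,n/\modulo{\preduction{B}}}$ with $\preduction{B}$ reduced; since $\preduction{B}$ is a prefix of $B$ and $B$ begins with $P^{m-1}$, it suffices to show $\modulo{\preduction{B}}\geq(m-1)p$. Then $\preduction{B}$ begins with $P^{m-1}$, whence $A=P^{m}=P\cdot P^{m-1}$ is a prefix of $P\cdot\preduction{B}=\preduction{A}\,\preduction{B}$, as wanted. The inequality $\modulo{\preduction{B}}\geq(m-1)p$ is a periodicity statement: the prefix $P^{m-1}$ of $B$ has period $p$ while $B$ has period $\modulo{\preduction{B}}$, and if $\modulo{\preduction{B}}<(m-1)p$ the Fine and Wilf theorem, together with the reducedness of both $P$ and $\preduction{B}$ and the divisibility $\modulo{\preduction{B}}\mid n$, produces a period of $P$ smaller than $p$, a contradiction. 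This periodicity bookkeeping is a routine but slightly delicate secondary step.

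The main obstacle is the auxiliary claim that $\pair{P^{m-1}0^{p}}{0}$ is a \maximal pair. I would derive it from the maximality of $\pair{P}{0}$ via the characterization, immediate from the order $\succ$, that $\pair{X}{0}$ is \maximal exactly when $X$ is lexicographically largest among its rotations by multiples of $k$ (a rotation by a non-multiple of $k$ has nonzero second component and is automatically $\prec$-smaller). Take a rotation of $P^{m-1}0^{p}$ by a multiple $t$ of $k$. If $t$ lands inside the terminal zero block, the rotated word begins with $0$ and is readily seen to be no larger. If $t=ap+b$ with $0\leq b<p$ lands inside a $P$-block then $k\mid b$, the rotated word begins with the rotation of $P$ by $b$, which is $\leq P$ by the maximality of $\pair{P}{0}$, and the comparison against $P^{m-1}0^{p}$ propagates block by block, the trailing zeros being lexicographically least. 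The delicate point is when $P$ begins with $0$: there the maximality of $\pair{P}{0}$ forces every rotation of $P$ by a multiple of $k$ to begin with $0$, and this structural fact is precisely what closes the comparison. Making this last step fully rigorous is where the real work lies.
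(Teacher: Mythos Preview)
Your overall strategy is sound and does lead to a proof, but it is a genuinely different route from the paper's, and the two acknowledged gaps you flag are exactly where the real difficulty lies.

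The paper's argument is far more direct. It does not sandwich $B$ at all; instead it exploits that $\B$ is obtained from $\A$ by iterating the operator~$\theta$. Writing $\preduction{A}=P=\prefix{A}{i}0^{p-i}$ with $a_i>0$, one application of~$\theta$ to $\A=\pair{P^{m}}{0}$ decrements the last nonzero symbol, which sits at position $(m-1)p+i$; every subsequent application of~$\theta$ touches only positions $\geq (m-1)p+i$. Hence whatever $h$ is, $\B=\theta^{h}\A$ has the shape
\[
B \;=\; P^{m-1}\,\prefix{A}{i-1}(a_i-1)(s-1)^{j-i}C
\]
for some word $C$. Two things fall out immediately: $B$ begins with $P^{m-1}$, and comparing position $(m-1)p+i$ with position~$i$ shows $B$ cannot have any period that is a proper divisor of~$n$ and a multiple of~$k$, so $\preduction{B}=B$. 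This gives $\preduction{A}\,\preduction{B}=P\cdot B$, whose first $mp$ symbols are $P\cdot P^{m-1}=A$.

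Compared with this, your approach trades one short computation for two nontrivial lemmas. The maximality of $\pair{P^{m-1}0^{p}}{0}$ is true, but your sketch does not yet handle cleanly the rotations $t=ap+r$ with $0<r<p$ when the first difference between $P$ and its $r$-rotation occurs past position $p-r$; the block-by-block comparison then interacts with the trailing $0^{p}$ in a way you have not spelled out. For the periodicity step, Fine--Wilf as you invoke it requires $(m-1)p\geq p+q-\gcd(p,q)$, which is not automatic from $q<(m-1)p$ alone (already for $m=2$ the hypothesis is vacuous); you need the extra divisibility $q\mid mp$ to close the argument, and this case analysis is exactly the ``slightly delicate'' part you defer. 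Both gaps can be filled, but the resulting proof is longer than the paper's, and it does not yield the stronger fact $\preduction{B}=B$ that the paper also uses later.
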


\begin{proof}
  We can write  $\A$ 
  as 
  \[\preduction{\A}^{q} = \pair{(\prefix{A}{i}0^{p-i})^{q}}{0},
  \]
  where $q = n / p$ and  $a_i > 0$. 
  If  $q = 1$ then $\preduction{\A} = \A=\pair{A}{0}$ and  
  $\wordconcat{\preduction{\A}}{\preduction{\B}} = \pair{A}{0}\pair{\preduction{B}}{0}=\pair{A\preduction{B}}{0}$.
   Otherwise,  $q > 1$  and, since 
      $\B=\suboperator{\A}^h$ for the smallest $h$ such that it is maximal, the shape of~$\B$ is
   \[
   \B = \pair{\preduction{A}^{q-1} \prefix{A}{i-1}(a_i - 1)(s - 1)^{j-i}C}{0}
   \]
   for some word  $C$ and for $j$  the smallest such that  $i \leq j \leq p$ and $k \mid j$. 
   Since  $\B=\pair{B}{0}$ and 
 $B$ starts with $\preduction{A}^{q-1} \prefix{A}{i-1}(a_i - 1)$
 we have 
 $\pair{\preduction{B}}{0}=\pair{B}{0}$, hence  $\preduction{\B}=\B$.  Then,
            \begin{align*}
                \wordconcat{\preduction{\A}}{
                \preduction{\B}} = \pair{\preduction{A}}{0} \pair{\preduction{B}}{0} =
                \pair{\preduction{A}}{0}\pair{B}{0}=
                \pair{\preduction{A}B}{0}=
                \pair{\wordconcat{\preduction{A}}{
                \preduction{A}^{q-1}} \prefix{A}{i-1}(a_i - 1)(s - 1)^{j-i}C}{0}.
            \end{align*}
    In both cases $A$ is prefix of $\wordconcat{\preduction{A}}{\preduction{B}}$.
\end{proof}

\subsection{Proof that necklace $X$ has length $s^n k$} \label{sec:1}
\subsubsection{\bf When  $k|n, k< n$}
The necklace $X$ is the concatenation of all the  words 
of the reduced \maximal pairs exactly once.
Thus, the length of  $X$ is 
\[
|X|=\sum_{\preduction{\A}\text{ is Lyndon }} |\preduction{\A}|.
\]
Each $\preduction{\A}$ is an element of $\L$, which consists of the reductions  of the pairs in the list $\M$, which by Lemma~\ref{lemma:lista} contains all the maximal pairs.
 The length of $\preduction{A}=\pair{\preduction{A}}{0}$ is the length of the word $\preduction{A}$, which is  the number of different   rotations of $\A$, see Observation~\ref{observacion:reduccion-maximo}.
If $\preduction{\A}=\pair{\preduction{A}}{0}=\pair{\word{a}{1}{p}}{0}$,
then $\A$ has $p$ different rotations:
$\pair{\word{a}{1}{p}}{0}$,  $\pair{\word{a}{2}{n}a_1}{1}$,\ldots, $\pair{a_p\word{a}{1}{n-1}}{p-1}$. 
So, each of the $p$ positions in $\preduction{A}$  is the start of a different rotation of $\A$.
By Lemma~\ref{lemma:vecinos} 
if the successor of $\A=\pair{A}{0}$  in $\M$  is 
 $\B=\pair{B}{0}$ then  in $X$ we have  $\wordconcat{\preduction{A}}{\preduction{B}}$,
and  $A$ is a prefix of $\wordconcat{\preduction{A}}{\preduction{B}}$.
Let $X=a_0\ldots a_{\ell-1}$. We  argue that each 
position $i$, for $i=0, 1, \ldots, \ell-1$, is the start 
of one of the different  pairs $\pair{\word{a}{i}{i+n-1}}{i\mod k}$ in $\Sigma^n\times \Z_k$.
We conclude that $X$ has exactly one position  for each  of the different pairs  in $\Sigma^n\times \Z_k$. Thus, the length  of $X$ is  $s^nk$.

\subsubsection{\bf When  $n|k$, $n\leq k$}
The necklace $X$ is the concatenation of all the  words of the expanded \maximal pairs.
Thus, length of  $X$ is 
\[
|X|=\sum_{\pexpansion{\A}\text{ is Lyndon }} |\widetilde{\A}|.
\]
 The length of a Lyndon pair $\pexpansion{A}=\pair{\pexpansion{A}}{0}$  is  the length of the expansion of a maximal pair $\pair{A}{0}$ in $\Sigma^n\times \Z_k$, which is exactly $k$.
There are $s^n$ many Lyndon pairs because, given that $n|k$,  every $\pair{A}{0}$ in $\Sigma^n\times \Z_k$ is maximal. 
  Since $X$ is the concatenation of all the Lyndon pairs, 
the length  of $X$ is  $s^nk$.

\subsection{Proof that necklace $X$ is $(n,k)$-perfect}\label{sec:2}

We need to show that each word of length~$n$ occurs exactly~$k$ times, at positions with different congruence modulo~$k$.
In this proof we number the positions of $X$ starting at $0$; this is convenient for the presentation because the positions with congruence $0$ are multiple of $k$.
We say that  we find a  pair $\pair{A}{u}$  in $X=a_0 a_1 \ldots a_{s^nk-1}$ 
when there is a position $p$ 
 such that $0\leq p<s^nk$, 
$p \equiv u \pmod{k}$,
 and
$a_p\ldots a_{p+|A|-1} = A$.
To prove that $X$ is a $(n,k)$-perfect necklace we need to find 
all the rotations of all the maximal pairs in $\Sigma^n\times\Z_k$  in the necklace $X$.

\subsubsection{\bf When  $k|n, k< n$}
Each maximal pair $\A=\pair{A}{0}$ has exactly $|\preduction{A}|$ many rotations.

\paragraph{\em Case $A = (s-1)^n$.}
Since $k|n$,
$\preduction{\A} $=$\pair{\preduction{A}}{0} =\pair{(s-1)^k}{0}$, application of $\suboperator{}$ on $\A$ yields the \maximal pair
\[
 \suboperator{\A} = \B = \pair{B}{0} = \pair{(s-1)^{n-1} (s-2)}{0}.
\]
Since $\preduction{\A}$ and $\preduction{\B}$ are consecutive Lyndon pairs in $\L$, the construction of $X$ puts  $\preduction{A}$   followed by~$\preduction{B}$,
\[
\wordconcat{\preduction{\A}}{\preduction{\B}} = \pair{(s-1)^k}{0} \pair{(s-1)^{n-1} (s-2)}{0} = \pair{(s-1)^{n+k-1} (s-2)}{0}.
\]
This yields all rotations of  $\A$: $\pair{(s-1)^n}{0}$, \ldots ,  \mbox{$\pair{(s-1)^n}{k-1}$}.

\paragraph{\em Case $A = 0^n$.}
Since $k|n$, $\preduction{\A} =\pair{\preduction{A}}{0}=\pair{0^k}{0}$.
Consider the \maximal pairs for $i$=$0,1,\ldots, k-1$,
\[
\B = \pair{B}{0}=\pair{0^i 1 0^{k-1-i} 0^{n-k}}{0}.
\]
By Lemma~\ref{lemma:es-maximal}, for each of these $\B$ the next  \maximal pair in $\M$ is
\[
\suboperator{\B} = \C = \pair{C}{0}= \pair{(0^{i+1} (s-1)^{k-1-i})^{n/k}}{0}.
\]
Since $\preduction{\C} = \pair{0^{i+1} (s-1)^{k-1-i}}{0}$, 
it  is clear that  $\C$ is \maximal because all the rotations of $\C$ that have second component $0$ are identical to $\C$.
Since $\preduction{\B}$ and $\preduction{\C}$ are consecutive Lyndon pairs in $\L$, the construction of $X$ puts  $\preduction{B}$   followed by $\preduction{C}$. 
Notice that for each $i=0,1,\ldots, k-1$,
\[
\wordconcat{\preduction{\B}}{\preduction{\C}} = \pair{0^i 1 0^{k-1-i} 0^{n-k} 0^{i+1} (s-1)^{k-i-1}}{0} = \pair{0^i 1 0^n (s-1)^{k-i-1}}{0}
\]
gives rise to the pair $\pair{0^n}{i+1}$. 
Thus, we have all  the rotations   $\A = \pair{0^n}{0}$, which are   $\pair{0^n}{0}$, $\ldots $,  $\pair{0^n}{k-1}$.

\paragraph{\em Case $(s-1)^n > A >0^n$}
Every \maximal pair $\A = \pair{A}{0}$ different from  $\pair{0^n}{0}$ has the form \[ 
\A=\pair{(\prefix{A}{p})^{q+1}}{0},
\]
where  $\prefix{A}{p}$  is $\preduction{A}$ with  $p$ the smallest integer such that $A = (\prefix{A}{p})^{q+1}$ and  $q + 1 = (n / p)$.

{\em Subcase   $q>0$}. The pair $\A$, different from  $\pair{0^n}{0}$, always has a successor  \maximal pair  in the list $\M$:
\[
 \B = \suboperator{\A} = \pair{(\prefix{A}{p})^q \prefix{A}{i-1}(a_i - 1)(s-1)^{j-i}\word{b}{i+1}{p}}{0},
\]
where $i$ is the largest with $a_i>0$, $j$ is the smallest multiple of  $k$ with  $j \geq i$. Notice that  $\prefix{A}{p} \neq 0^p$. 
Since $\preduction{\B} = \B$, 
\[
\wordconcat{\preduction{\A}}{\preduction{\B}} =  \pair{(A_p)^{q+1}\prefix{A}{i-1}(a_i - 1)(s-1)^{j-i}\word{b}{i+1}{p}}{0},
\]
and it also yields the first $i$ left rotations of $\A$, which are 
$\pair{\word{a}{r+1}{n}\prefix{A}{r}}{r}$, with $0 \leq r < i$.
It remains to identify in the constructed necklace  $X$ the  $p-i$ right rotations of 
  $\A = \pair{(A_p)^{q+1}}{0}$.
These  are of the form 
 \[
\pair{a_{r}\ldots a_p A_p^{q}A_{p-i-r} }{-r},
\]
for $1 \leq r  \leq p-i$.
By Lemma~\ref{lemma:preimage} we can consider 
the predecessor $\Q$ of $\A$ by $\theta$,
\[
\suboperator{\Q}= \A= \pair{(\prefix{A}{p})^{q+1}}{0}  
\]
where
\[
\Q  = \pair{\prefix{A}{p-1}(a_p + 1)0^{pq}}{0}.
\]
To see that $\Q$ is a \maximal pair consider first $\R = \pair{\prefix{A}{p}}{0}$ which,  by Observation~\ref{obs:copias},   is  maximal.
We now argue that
$\prefix{A}{p}$
has no proper suffix 
$ \word{a}{i}{p}$ that coincides with
$\prefix{A}{p-i+1}$. 
If there were such a prefix we could construct the rotation of $\R$ given by the pair $\S = \pair{\word{a}{i}{p} \prefix{A}{i-1}}{0}$ 
and one of the following would be true:

\begin{itemize}
    \item $\S = \R$: but this is impossible by Observation~\ref{observacion:reduccion-maximo}.
    
    \item $\R \succ \S$: Since 
    $\R=\pair{\prefix{A}{p-i+1}\word{a}{p-i+2}{p}}{0}$,
     $\S =\pair{a_i\ldots a_p A_{i-1}}{0}$ and we assumed 
    $\word{a}{i}{p} = \prefix{A}{p-i+1}$,
        necessarily  
     $ \word{a}{p-i+2}{p}> \prefix{A}{i-1} $.  But this  contradicts that $\R$ is a  \maximal pair.
    
    \item $\S \succ \R$: 
There is a rotation of $\R$  which is $\succ$-greater than $\R$, contradicting that $\R$ is a \maximal pair.
\end{itemize}
We conclude that all suffixes  
$ \word{a}{i}{p}$ of  $R$ are  lexicographically smaller than 
$\prefix{A}{p-i}$.
Therefore, 
all suffixes of $\word{a}{i}{p-1}(a_{p}+1)$  of $Q$
are lexicographically smaller than or equal to~$\prefix{A}{p-i}$.
We already  argued that   $\Q$ is indeed  maximal. 
For any rotation of $\Q$ 
    of the form  \linebreak
    $\pair{\word{a}{i}{p-1}(a_p + 1)0^{pq}\word{a}{1}{i-1}}{0}$, 
    for $i>1$, we argued that 
    $\word{a}{i}{p-1}(a_p + 1)$ is lexicographically smaller than or equal to  $A_{p-i}$. 
    Moreover, $\word{a}{i}{p-1}(a_p + 1)0^{pq}$  is lexicographically smaller than 
    $Q=\prefix{A}{p-1}(a_p + 1)0^{pq}$.
    For any rotation of $\Q$ that starts with a suffix of $0^{pq}$ it can not be maximal, 
    because for any $m$,
    $\prefix{Q}{m} > 0^m$, otherwise $\A$  would not be  maximal.

We have the maximal  successive pairs
$\Q=\pair{Q}{0}$, $\theta \Q=\A=\pair{A}{0}$ and  $\B=\theta \A=\pair{B}{0}$.
From the arguments above,  $\preduction{Q} = Q$ and $\preduction{B}=B$.
Since  $\A=\pair{(A_p)^{q+1}}{0}$ and we assumed  $q>1$,
$A$ is not equal to $\preduction{A}$.
Then,
$
\wordconcat{\preduction{Q}}{\wordconcat{\preduction{A}}{\preduction{B}}} = Q\preduction{A}B$.
Observe that the last $pq$ symbols of $Q$ followed by  $\preduction{A}$, followed by the first $pq$ symbols of $B$  give rise to 
\[
\pair{0^{pq}(A_p)^{q+1}}{0},
\]
where $0$ is because 
 $p$ is a multiple of $k$.
We now identify in this pair  the $p-i$ rotations of $\A$ to the right.
Since  $\A=\pair{A}{0}$ where
$A=(A_i 0^{p-i})^{q+1}$,
 after  $r$ rotations to the right of $\A$, with $1 \leq r \leq p-i$,
we obtain   
\[
\pair{0^{r}(A_p)^q A_i0^{p-i-r}}{-r}.
\]


{\em  Subcase  $q=0$.} 
We need to see that for the \maximal pairs
$\A=\pair{A}{0}$
where $A$ is reduced, that is $A=\preduction{A}$,
we can find all rotations of $\A$ in the constructed $X$.
If $A = 0^{k-1} 1 0^{n-k}$
then  $\suboperator{\A} = \B=\pair{B}{0}=\pair{ 0^n}{0}$, which is the last  \maximal pair in the list~$\M$ and $\preduction{B}=0^k$.
Then,
\[
\wordconcat{\preduction{A}}{\preduction{B}} =0^{k-1} 1 0^{n-k} 0^k = 0^{k-1} 1 0^{n},
\]
and we can find   $k$ left rotations of $\A$ which are of the form
\[
\pair{0^{k-1-r} 1 0^{n-k+r}}{r} \text{ for $r=0,1,\ldots  k-1$}.
\]
Now assume $A \neq 0^{k-1} 1 0^{n-k}$. Let's write  
\[
\A=\pair{A_i0^{n-i}}{0},
\]
with $i$ such that  $a_i > a_{i+1} = \ldots = a_{n} = 0$,
$\B=\pair{B}{0}=\suboperator{\A}$ and $\C=\pair{C}{0}=\suboperator{\B}=\suboperator{}^{2}{\A}$. Then $\B$ is of the form
\[
\B = \pair{\prefix{A}{i-1}(a_i - 1)(s-1)^{j-i}\word{b}{j+1}{p}}{0},
\]
with $j$ is the least multiple of  $k$ with  $j \geq i$. 
Since $A=\preduction{A}$, and by Lemma~\ref{lemma:vecinos} $B$ is a prefix of $\wordconcat{\preduction{B}}{\preduction{C}}$, then $AB$ is a prefix of $\wordconcat{\preduction{A}}{\wordconcat{\preduction{B}}{\preduction{C}}}$, and we can find the first $i$ left rotations of $\A$ in it, which are of the form 
\[
\pair{\word{a}{r+1}{n}\prefix{A}{r}}{r}, \text{ for $r=0,1,\ldots , i-1$}.
\]
It remains to find $n-i$ right rotations of $\A$, which are of the form 
\[
\pair{0^{r} \prefix{A}{i}0^{n-i-r}}{-r}
\]
for $r=1,\ldots,n-i$.
Equivalently we can write it as 
\[
\pair{0^{n-i-h} \prefix{A}{i}0^h}{(n-i-h)}
\]
for $h=0,1,\ldots,n-i-1$.

If $h = 0$ and $\prefix{A}{i} = (s-1)^{i}$ then the pair  is
$
\pair{0^{n-i}(s-1)^{i}}{n-i}$. 
We find it  in the  pair 
$\pair{0^n (s-1)^n}{0}$, 
which results 
from the concatenation of the words in the last  two  Lyndon  pairs in  
$\L$ and the first two, that is  
$\L_{M-1}$ and  $\L_{M}$, followed by 
 $\L_1$ and $\L_2$.

If $h \neq 0$ or $\prefix{A}{i} \neq (s-1)^i$ then we find the  right rotations  of $\A$ in the concatenation of the words of three Lyndon pairs, that we call $\wordconcat{\preduction{\Q_t }}{
 \wordconcat{\preduction{\P }}{\preduction{\R}}}$ and we define below.
 Recall that  $\A=\pair{A}{0}=\pair{A_i0^{n-i}}{0}$, with $i$ such that  $a_i > a_{i+1} = \ldots = a_{n} = 0$,  is a \maximal pair.
We claim there is a unique  pair $\Q$ of the form
\[
\Q = \pair{\prefix{A}{i} 0^h S}{0},
\]
where $|S|=n - (i+h)$  is not zero. 
Notice that $\Q$ may not be maximal.

Since $\Q$ is a  pair  $\succ$-greater than~$\A$,
we know $\Q$ is a predecessor of $\A$ by~$\suboperator{}$.  
The pair $\Q_t$ is  the closest predecessor of $\Q$ by $\suboperator{}$ that is a maximal pair.
For this, consider the successive  application of the operator $\suboperator{}$,
which  allows us to traverse the list \linebreak
$(\theta^l \pair{(s-1)^n}{0})_{1\leq l\leq T}$.
Our interest is to traverse it backwards.
Here is  a diagram:
\medskip

\begin{center}
\begin{tikzpicture}[shorten >=1pt,node distance=6.3cm,on grid,auto]
  \tikzstyle{every state}=[fill={rgb:black,1;white,10}]
    \node[circle,minimum size=1.5cm,draw]  (q_1)                    {\small $\Q_t$};
    \node[circle,minimum size=2cm,draw] (q_2)  [right of=q_1]    {\small $\Q\!=\!\pair{A_i0^hS}{0}$};
    \node[circle,minimum size=1.5cm,draw]  (q_3)  [right of=q_2]    {\small $\A$};
    \path[->]
    (q_1) edge[very thick]   node {\text{\small apply many times $\theta$}}    (q_2)
    (q_2) edge[very thick]   node {\text{\small apply many times $\theta$}}    (q_3);
   \end{tikzpicture}
\end{center}
\medskip

For  $l=1,2,3, \ldots $ let  $\Q_l=\pair{Q_l}{0}$  be the pair  such that 
 $\suboperator{}^{l}{\Q_l}=\Q$.
For $l=1$,
\begin{align*}
\Q_1&=\pair{Q_1}{0} = \pair{\prefix{A}{u_1-1} (a_{u_{1}} + 1) 0^{n-u_1}}{0},
\\
\suboperator{\Q_1} 
&=
\pair{(\prefix{A}{u_1}(s-1)^{r_1-u_1})^{w_1} \prefix{A}{v_1} S }{0}
= \pair{(\prefix{A}{r_1})^{w_1} \prefix{A}{v_1} S}{0} 
=\pair{ \prefix{A}{i} 0^h S}{0}
=\Q,
\end{align*}
where $r_1$ is the least multiple of $k$ such that $v_1 < r_1$, 
$a_{u_1} < (s-1)$,
$\prefix{A}{r_1} = \prefix{A}{u_1} (s-1)^{r_1 - u_1}$ and
$r_1-k \leq u_1 \leq r_1$.
If $\Q_1$ is a \maximal pair then fix $t=1$ and we have finished the search. Otherwise we consider the predecessor of  $\Q_1$ by $\suboperator{}$,
\[
\Q_2 = \pair{Q_2}{0}=\pair{\prefix{A}{u_2-1} (a_{u_2} + 1) 0^{n-u_2}}{0}
\]
such that
\[
\suboperator{\Q_2} =\pair{ (\prefix{A}{u_2}(s-1)^{r_2-u_2})^{w_2} \prefix{A}{v_2}}{0} = \pair{(\prefix{A}{r_2})^{w_2} \prefix{A}{v_2}}{0} =\Q_1,
\]
where  $r_2$ is the least multiple of  $k$, $v_2 < r_2$, and $a_{u_2} < (s-1)$ such that  $\prefix{A}{r_2} = \prefix{A}{u_2} (s-1)^{r_2 - u_2}$.
We know that 
$a_{u_2} < (s - 1)$ exists because
 $\Q_1 \neq \pair{(s-1)^n}{0}$,
 otherwise  $\Q_1$ would be a \maximal pair, and then there is $a_i < s - 1$ for  some $1 \leq i \leq r_2$.
 Notice that  $u_2 < u_1$.
If $\Q_2$ is a  \maximal pair then we fix  $t=2$ and we have finished the search. Otherwise, we repeat 
this procedure. In this way the predecessor of 
 $\Q_{l-1}$  by $\suboperator{}$ is 
\[
\Q_l = \pair{Q_l}{0} =\pair{\prefix{A}{u_l-1} (a_{u_l} + 1) 0^{n-u_l}}{0},
\]
such that 
\[
\suboperator{\Q_l} = \pair{(\prefix{A}{u_l}(s-1)^{r_l-u_l})^{w_l} \prefix{A}{v_l}}{0} = \pair{(\prefix{A}{r_l})^{w_l} \prefix{A}{v_l}}{0} = \Q_{l-1},
\]
where  $r_l$ is  multiple of $k$, $v_l < r_l$, $u_l < u_{l-1}$ and $a_{u_l} < (s-1)$ such that  
\[
\prefix{A}{r_l} = \prefix{A}{u_l} (s-1)^{r_l - u_l}.
\]
Eventually we find  $t$ such that $\Q_t$ is a \maximal pair. 
%
Consider now the three consecutive  \maximal pairs in the list $\M$,  
\[
\Q_t=\pair{Q_t}{0},  \P=\pair{P}{0}, \text{ and } \R=\pair{R}{0}.
\]
Let  $\overline{\Q_t}, \overline{\P}$ and $\overline{\R}$ be  the corresponding Lyndon pairs. $\P$ always exists, because it's either $\A$ or a maximal pair before it, and $\R$ is $\pair{0^n}{0}$ in the worst case.
Notice that  $\preduction{Q_t}$ ends with $0^{n-u_t}$
and, by  Lemma~\ref{lemma:vecinos},
 $\wordconcat{\preduction{P }}{\preduction{R}}$
starts with   $P$. Therefore, $\wordconcat{\preduction{\Q_t }}{
 \wordconcat{\preduction{\P }}{\preduction{\R}}}$ contains 
$0^{n-u_t} P = 0^{n-u_t} \prefix{A}{i} 0^h C$, for some~$C$.
Since   $\P$ is a \maximal pair $\succ$-greater than or equal to $\A$,  we can assert that its prefix is $\prefix{A}{i}0^h$.
Finally, we have
\[
u_t \leq u_1 \leq i + h,
\]
because
$
u_t < u_{t-1} < \ldots < u_1
$
and
$u_1 \leq i + h$
because $u_1$ was the 
position of a symbol in $A_i 0^h$, which has length $i+h$. 
Then,
\begin{align*}
u_t \leq u_1 \leq i + h 
& \iff
n - u_t \geq n - i - h.
\end{align*}
We conclude that
$\preduction{\Q_t}\preduction{\P}\preduction{\R}$ contains
\[
\pair{0^{n-i-h} \prefix{A}{i} 0^h}{(\wordlength{\preduction{Q_h}} - (n-i-h))}.
\]
This can be rewritten  as the pair
\[
 \pair{0^{n-i-h} \prefix{A}{i} 0^h}{-(n-i-h)}, 
\]
because $\wordlength{\preduction{Q_t}}$ is a multiple of $k$, hence $\Q_t$  has second component $0$.
\medskip

\subsubsection{\bf When $n|k$, $n\leq k$}
This proof is similar to the previous one, but now we need to use the corresponding definitions of $\suboperator{}$, the list  $\M$ and the notion of expansion  to define the list $\L$. The proof becomes simpler because it requires fewer cases, and each of these cases is simpler too.
The key observation for this proof is that each word $A$ of length $n$, repeated $k/n$ times,  determines a    Lyndon pair
$\pexpansion {A}=\pair{\pexpansion{A}}{0}$, which has exactly $|\pexpansion{A}|=k$ many different rotations.

\paragraph{\em Case $A = 0^n$.}
We can find the first $k-n+1$ left rotations of $A$ inside 
 $\pexpansion{\A} =\pair{\pexpansion{A}}{0}=\pair{0^k}{0}$, which are of the form $\pair{0^n}{i}$, with $0\leq i < k-n+1$.
Consider the \maximal pairs for $0 \leq i < n-1$,
\[
\B = \pair{B}{0}=\pair{0^i 1 0^{n-1-i}}{0}.
\]
For each of these $\B$,
\[
\suboperator{\B} = \C = \pair{C}{0}= \pair{0^{i+1} (s-1)^{n-1-i}}{0}.
\]
Since $\pexpansion{\B}$ and $\pexpansion{\C}$ are consecutive Lyndon pairs in $\L$, the construction of $X$ puts $\pexpansion{B}$   followed by $\pexpansion{C}$. 
Notice that for each $i=0,1,\ldots , k-1$,
\[
\wordconcat{\pexpansion{\B}}{\pexpansion{\C}} = \pair{(0^i 1 0^{n-1-i})^{k/n - 1} 0^i 1 0^n (s-1)^{n-1-i} (0^{i+1} (s-1)^{n-1-i})^{k/n - 1}}{0} 
\]
gives rise to the pair $\pair{0^n}{i+1}$. 
Thus, we have all  the rotations  of the pair $\A = \pair{0^n}{0}$, which are   $\pair{0^n}{0}$, $\ldots $,  $\pair{0^n}{k-1}$.

\paragraph{\em Case $A > 0^n$.}
Let $A=\word{a}{1}{n}$, and let $j$ be the largest such that $a_j>0$. Fix $j$.
Since the pair $\A$ is different from $\pair{0^n}{0}$, it has necessarily a \maximal pair successor in the list $\M$,
\[
 \B = \suboperator{\A} = \pair{\prefix{A}{j-1}(a_j - 1)(s-1)^{n-j}}{0},
\]
Since $\pexpansion{\B} = \B^{k/n}$, 
\[
\wordconcat{\pexpansion{\A}}{\pexpansion{\B}} =  \pair{A^{k/n}(\prefix{A}{j-1}(a_j - 1)(s-1)^{n-j})^{k/n}}{0},
\]
and it also yields the first $k - n + j$ left rotations of $\A$, which are 
$\pair{\word{a}{(i\mod n) + 1}{n}\prefix{A}{i \mod n}}{i}$, for $0 \leq i < k - n + j$.
Note that all of the rotations for $A = (s-1)^n$ are contained inside these left rotations, as $j = n$, and then $0 \leq i < k$.

If $A = (s-1)^n$ then  the $k$ rotations of $\pexpansion{\A}=\pair{\pexpansion{A}}{0}$  are considered above by taking  $j = n$ and  $0 \leq i < k$.
In case $A\not=(s-1)^n$ we have considered just $k-n-j$ rotations 
which are 
\[
\pair{0^{n-j-h} \prefix{A}{j}0^h}{-(n-j-h)}
\]
for $h=0,1,\ldots,n-j-1$.

If $h = 0$ and $\prefix{A}{j} = (s-1)^{j}$ then the pair  is
$
\pair{0^{n-j}(s-1)^{j}}{j-n}$. 
We find it in the  pair 
$\pair{0^k (s-1)^k}{0}$, 
which results 
from the concatenation of the last and first Lyndon pairs in  
$\L$, which are  
$\L_{M-1}$ and $\L_{1}$ respectively.

If $h \neq 0$ or $\prefix{A}{j} \neq (s-1)^j$ then
the pairs are $\pair{0^{n-j-h} \prefix{A}{j}0^h}{-(n-j-h)}
$ for $h=1, \ldots n-j-1$, and we find them in 
the concatenation of the words of two Lyndon pairs, that we call $\wordconcat{\pexpansion{\Q }}{\pexpansion{\P }}$ and we define below.
Let $i$ be the largest such that  $a_i < s-1$ and $i \leq j+h$, and let $\Q=\pair{Q}{0}=\pair{\prefix{A}{i-1} (a_i + 1) 0^{n-i}}{0}$.
By Observation~\ref{obs:todos-maximales},
$\Q$ is a Lyndon pair, different from~$\pair{0^n}{0}$. Notice  that $\Q$ is well defined because
there is some $i$, with $1\leq i\leq n$ such that $1\leq a_i + 1\leq s-1$. 
To see this  notice that  
either
$h = 0$ but $\prefix{A}{j}\neq (s-1)^j$, hence,  there exists  $a_i < (s-1)$; or $h > 0$ and then $i = j+h$.
%
Let $\P$ be the successor of $\Q$ in $\M$, which is of the form
\[
\pair{\prefix{A}{i}(s-1)^{n-i}}{0}.
\]
The construction concatenates $\wordconcat{\pexpansion{\Q }}{\pexpansion{\P }}$, resulting in
\[
\pair{\prefix{A}{i-1} (a_i + 1) 0^{n-i}}{0}^{k/n} \pair{\prefix{A}{i} (s-1)^{n-i}}{0}^{k/n}.
\]
Finally, if we take the suffix $\prefix{\pexpansion{\Q}}{n-j-h}$ followed by the prefix $\prefix{\pexpansion{\P}}{n-j-h}$ it results in the pair $\pair{0^{n-j-h}\prefix{A}{j+h}}{-(n-j-h)}$.
Given that we can do this for every possible $h=0, 1,\ldots n-j-1$, we have obtained the wanted $n-j$ right rotations. Consequently, we have found all the rotations of $\A$.

\color{black}

\subsection{Proof that necklace $X$ is the lexicographically greatest $(n,k)$-perfect necklace}
\label{sec:3}

The necklace $X$ is the concatenation of all the words in the 
Lyndon pairs in the list \linebreak
$\L=(\pair{w_i}{0})_{1\leq i\leq M}$.
Let $\ell_0=0$ and let $(\ell_i)_{1\leq i\leq M}$
such that $\ell_i=\sum_{m=1}^{i}|w_m|$.
We already showed $|X|=s^n k$, hence,  $\ell_M= s^n k$.
The necklace 
$X= w_1 w_2 \ldots w_M = a_0 a_1 , \ldots a_{s^n k-1}$,
where 
for $i=1, 2, \ldots,  M$,  $ w_i=a_{\ell_{i-1}},\ldots a_{\ell_{i}-1}$.
Since each $w_i$  is a Lyndon word, $w_i$ is lexicographically greater than all of its rotations.
Now, instead of looking at an index $i$ from $1$ to $M$, 
let's consider an index $j$ running through all the positions of $X$, from $0$ to $s^nk-1$.
Consider the set of pairs 
\[
{\mathcal S}=\{ \pair{a_j\ldots a_{j+n-1}}{j\mod k} : 0\leq j\leq s^nk-1 \}.
\]
Since we already proved that $X$ is  $(n,k)$-perfect, the set ${\mathcal S}$
is exactly $\Sigma^n\times\Z_k$.
The construction guarantees that 
 each position $j=0, \ldots, s^nk-1$  is the start  of one of the different pairs in~${\mathcal S}$.
 As a consequence of the order of the elements in  the list $\L$,
when   $j=\ell_i-1$  for some~$i$, $1\leq i\leq |M|$, the pair
$\pair{a_j\ldots  a_{j+n-1}}{0}$
is the $\succ$-greatest among the  remaining pairs with second component~$0$.
If  $k|n$ and $k<n$, each $a_j$  is a symbol in   $n$ different pairs in $\Sigma^n\times \Z_k$, $n/k$ of them with second component $0$.
For example, 
$a_{j+n}$ is in $n$ pairs,
\[
\pair{a_{j+1}\ldots a_{j+n} }{(j+1)\mod k}, \ldots,
\pair{a_{j+n}\ldots a_{j+2n-1}}{(j+n)\mod k}.
\]
If  $n|k$ and $n\leq k$, each $a_j$  is a symbol in  $k$ different pairs in $\Sigma^k\times \Z_k$.
There is exactly one of these with second component $0$ and it has the form  $\pair{A^{k/n}}{0}$, with $A\in\Sigma^n$. 
For example, 
$a_{j+k}$ is in $k$ pairs,
\[
\pair{a_{j+1}\ldots a_{j+k} }{(j+1)\mod k}, \ldots,
\pair{a_{j+k}\ldots a_{j+2k-1}}{(j+k)\mod k}.
\]
Implicitly our construction defines a function  $f:\Sigma^n\to\Sigma$, such that for each $j=0, \ldots, s^nk-1$,
\[
a_{j+n} = f(a_{j}, \ldots , a_{j+n-1}).
\]
Suppose
 $b_0, b_1, \ldots, b_{s^n k-1}$  is another  $(n,k)$-perfect necklace and there is a position $p$ such that 
 $b_0=a_0, b_1=a_1, \ldots, b_{p-1}=a_{p-1}$  but $ b_{p}> a_{p}$.
Then, there is a position $q$ congruent to $0$ modulo $k$ such that
$p-\max(n,k)+1\leq q\leq p+\max(n,k)-1$,
 and 
\[
\word{b}{q}{q+\max(n,k)-1}>
\word{a}{q}{q+\max(n,k)-1}.
\]
Then, the respective  pairs starting at position $q$ satisfy,
\[
\pair{\word{b}{q}{q+n-1}}{0} \succ \pair{\word{a}{q}{q+n-1}}{0}.
\]
But this contradicts that our construction puts in $a_0 \ldots a_{s^nk-1}$ the pairs $\pair{A}{0}$ in $\Sigma^n\times\Z_k$ in $\succ$-decreasing ordering.
This completes the proof of Theorem~\ref{thm:perfectp}.

\section*{Acknowledgements}
We thank the anonymous reviewer for their careful reading of the manuscript and for the comments and suggestions, which significantly improved the presentation.
This work  is supported by the projects
CONICET PIP 11220210100220CO and
UBACyT 20020220100065BA.
\medskip

\bibliographystyle{plain}
\bibliography{lyndon}
\end{document}